\def\Bbb{\mathbb}
\DeclareMathOperator{\Hess}{Hess}
\DeclareMathOperator{\supp}{supp}
\def \supp {\text{\rm supp\,}}
\def\bR{{\Bbb R}}
\def\bpm{\begin{pmatrix}}
\def\epm{\end{pmatrix}}
\def\bee{\begin{enumerate}}
\def\ee{\end{enumerate}}
\newtheorem{thm}{Theorem}[section]
\newtheorem{prop}[thm]{Proposition}
\newtheorem{proposition}[thm]{Proposition}
\newtheorem{cor}[thm]{Corollary}
\newtheorem{remark}[thm]{Remark}
\begin{document}


\title[Estimates for convolution operators] {Sharp estimates for convolution operators associated to hypersurfaces in $\mathbb{R}^3$ with height $h\le2$}
\author[Akramov]{Ibrokhimbek Akramov}
\address{Silk Road International University of Tourism and Cultural Heritage,
University Boulevard 17, 140104, Samarkand,  Uzbekistan} \email{{\tt
i.akramov1@gmail.com}}
\medskip
\author[Ikromov]{Isroil A. Ikromov}
\address{Institute of Mathematics named after V.I. Romanovsky,
University Boulevard 15, 140104, Samarkand,  Uzbekistan} \email{{\tt
i.ikromov@mathinst.uz}}


\thanks{2000 {\em Mathematical Subject Classification.}
42B10, 42B20,  42B37}
\thanks{{\em Key words and phrases.}
  Convolution operator, hypersurface, oscillatory integral, singularity}

\begin{abstract} {
In this article, we   study the convolution  operator $M_k$  with oscillatory kernel, which is related with solutions to the Cauchy problem for the strictly hyperbolic equations.  The operator  $M_k$ is associated to the characteristic hypersurface $\Sigma\subset \mathbb{R}^3$ of the equation and   the smooth amplitude function, which is homogeneous  of order $-k$ for large values of the argument.
We   study  the convolution operators    assuming  that the support of the corresponding amplitude function is contained in a sufficiently small conic  neighborhood of a given point $v\in \Sigma$  at which the height of the surface is less or equal to two. Such class contains surfaces related to simple and the $X_9, \, J_{10}$  type singularities  in the sense of Arnol'd's classification.  Denoting by $k_p$ the minimal   exponent such that $M_k$ is $L^p\mapsto L^{p'}$-bounded for $k>k_p,$ we show that the number $k_p$ depends on some discrete characteristics of the Newton polygon of a smooth function constructed in an appropriate coordinate system.
}

 \end{abstract}
\maketitle



\thispagestyle{empty}

\setcounter{equation}{0}
\section{Introduction}\label{introduction}
It is well known that  solutions to the Cauchy problem for the strictly hyperbolic equations  up to a smooth function can be written as a sum of convolution operators of the type:
\begin{eqnarray*}\nonumber
\mathcal{M}_k=F^{-1}[e^{it\varphi}a_k]F,
\end{eqnarray*}
where  $F$ is the Fourier transform operator, $\varphi\in
C^{\infty}(\mathbb{R}^{\nu}\backslash\{0\})$
 is homogeneous of order one, $a_{k}\in
C^{\infty}(\mathbb{R}^{\nu}_\xi)$ is a homogeneous function of order $-k$ for large $\xi$ (see \cite{sugumoto94} and \cite{sugumoto98}).

After scaling arguments in the time $t>0$ the operator $\mathcal{M}_k$ is reduced to the following convolution operator:
\begin{equation}\label{convoper}
M_k=F^{-1}[e^{i\varphi}a_k]F.
\end{equation}
Further, we investigate the operator $M_k$.
More generally, we may assume that the amplitude function in $M_k$, defined by \eqref{convoper},  belongs to the space  of the classical symbols of Pseudo-Differential Operators  (PsDO)  of order $-k$, which is  denoted  by $S^{-k}(\mathbb{R}^\nu)$ (see \cite{hor}).
Indeed, it is well known that the PsDO is bounded on $L^p(\mathbb{R}^\nu)$ for $1<p<\infty$, whenever $a_0\in S^0(\mathbb{R}^\nu)$. Consequently, the problem is essentially reduced to the case when $a_k$ is a smooth function, which is homogeneous of order $-k$ for large $\xi$.
Therefore, WLOG  we may assume that an amplitude function is smooth and homogeneous of order $-k$ for large $\xi$.

Let $1\leq p\leq 2$ be a fixed number.  We consider the problem: {\it Find the minimal  number $k(p)$  such that the operator $M_{k}: L^{p}(\mathbb{R}^\nu)\rightarrow
L^{p'}(\mathbb{R}^\nu)$ is bounded for any amplitude function $a_k\in S^{-k}(\mathbb{R}^\nu)$, whenever $k>k(p)$. }

Further, we will assume that the function $\varphi$ preserves sign, i.e. we will suppose that $\varphi(\xi)\neq0$ for any $\xi\in \mathbb{R}^\nu\setminus\{0\}$ and $\nu\ge2$. If $\varphi(\xi^0)=0$ at some point $\xi^0\neq0$ then the additional difficulties arise in the investigation of the corresponding operators. We will work out on such kind of operators elsewhere.   Next, without loss of generality we may and will assume that $\varphi(\xi)>0$ for any $\xi\neq0$.
Since  $\varphi$ is a smooth homogeneous function of order one, then, due to the Euler's homogeneity relation we have:
$$
\sum_{j=1}^{n}\xi_{j}\frac{\partial\varphi(\xi)}{\partial\xi_j}=\varphi(\xi),
$$
 and hence the set $\Sigma$  defined by the following:
 \begin{equation}\label{charaksh}
 \Sigma=\{\xi\in\mathbb{R}^\nu : \varphi(\xi)=1\}
 \end{equation}
  is a smooth  or a real analytic hypersurface provided $\varphi$ is a smooth or a real analytic function in $\mathbb{R}^n\setminus\{0\}$ respectively.

Further, we use notation:
\begin{equation}\label{(1.2)}
k_p(\Sigma):=\inf_{k>0}\{k>0:  M_{k}: L^{p}(\mathbb{R}^\nu)\rightarrow
L^{p'}(\mathbb{R}^\nu)\, \mbox{is bounded for any}\, a_k \in S^{-k}(\mathbb{R}^\nu \}.
\end{equation}
It turns out that the number $k_p(\Sigma)$ depends on some geometric properties of the  hypersurface
$\Sigma.$

M. Sugimoto \cite{sugumoto98} consider the problem for the case when $\Sigma\subset \mathbb{R}^3$ is an analytic   surface having at least one non-vanishing principal curvature at every point and obtain an upper bound for the number $k_p(\Sigma)$.  Further, in the paper \cite{Ikrs2} it was considered the same problem and obtained the exact value of the number $k_p(\Sigma)$ in the case of classes of hypersurfaces in $\mathbb{R}^3$ with at least one non-vanishing principal curvature.

  The natural question is: {\bf How can be characterised the number $k_p(\Sigma)$   for the    of hypersurface $\Sigma$ with vanishing   principal curvatures at a point of $\Sigma\subset \mathbb{R}^3$ ?}

We obtain the exact value of $k_p(\Sigma)$, extending  the results proved by M. Sugimoto,  for arbitrary analytic hypersurfaces  satisfying the condition $h(\Sigma)\le2$ (where $h(\Sigma)$ is the height of the hypersurface introduced in \cite{IMacta})   and  for analogical  smooth hypersurfaces under the so-called   $R-$ condition. Actually, the $R-$ condition can be   defined for any smooth function in terms of Newton polyhedrons (see \cite{IMmon}).

Since $\Sigma$ is a compact hypersurface, then following M. Sugimoto it is enough to consider the local version of the problem.
More precisely, we may assume that the amplitude function
 $a_{k}(\xi)$ is concentrated in a sufficiently small conic neighborhood $\Gamma:=\Gamma(v)$ of a given point $v\in \Sigma$   and
$\varphi(\xi)\in C^{\infty}(\Gamma)$. Let's denote by $S^{-k}_0(\Gamma)$ the space of all classical symbols of PsDO of order $-k$ with support in $\Gamma$. Fixing such a point $v\in \Sigma$, let us define the following local exponent $k_p(v)$ associated to this point:
\begin{equation}\label{local}
k_p(v):=\inf_{k>0}\{k: \exists\Gamma\ni v, \,  M_{k}: L^{p}(\mathbb{R}^3)\mapsto
L^{p'}(\mathbb{R}^3)\, \mbox{is bounded, whenever}  \,a_k\in S^{-k}_0(\Gamma)\}.
\end{equation}

Surely, the inequality $k_p(v)\le k_p(\Sigma)$ holds true  for any $v\in \Sigma$.  We show that  the following relation
\begin{eqnarray*}
k_p(\Sigma)=\sup_{v\in \Sigma}k_p(v)
\end{eqnarray*}
holds.
Moreover, our results yield that $k_p(v)$ is an upper semi-continuous function of $v$. Hence, the "supremum"  attains  and we  have:
 \begin{eqnarray*}
k_p(\Sigma)=\max_{v\in \Sigma}k_p(v).
\end{eqnarray*}

Further, we  use the following standard notation assuming $F$ being  a sufficiently smooth function:
$$
\partial^\gamma F(x):=\partial_1^{\gamma_1}\dots \partial_\nu^{\gamma_\nu}F(x):=\frac{\partial^{|\gamma|}F(x)}{\partial x_1^{\gamma_1}\dots\partial x_\nu^{\gamma_\nu}},
$$ where $\gamma=(\gamma_1,\dots, \gamma_\nu)\in \mathbb{Z}^\nu_+$ is a multiindex,  with $\mathbb{Z}_+:=\{0\}\cup \mathbb{N}$, and $|\gamma|:=\gamma_1+\dots+\gamma_\nu.$

Furthermore, for the sake of being definite  we will assume that
$v=(0,0,1)\in \Sigma\subset \mathbb{R}^3$ and $\varphi(0,0,1)=1$. Then after possible a linear transform in the space $\mathbb{R}^3_\xi$, which preserves the point $v$, we may assume that $\partial_1\varphi(0, 0, 1)=0$ as well as $\partial_2\varphi(0, 0, 1)=0$.
Then, in a sufficiently small  neighborhood of the point  $v$ the hypersurface $\Sigma$ is given as the graph of a smooth function. More precisely, we have:
\begin{equation}\label{defphi}
\Sigma\cap\Gamma=\{\xi\in\Gamma:\varphi(\xi)=1\}=\{
(\xi_1, \xi_2, 1+\phi(\xi_1,\xi_2))\in \mathbb{R}^3:  (\xi_1,\xi_2)\in U\},
\end{equation}
where
$U\subset \mathbb{R}^2$ is a sufficiently small neighborhood of the origin and, $\phi\in C^{\infty}(U)$ is a smooth function satisfying $\phi(0,0)=0,
\nabla\phi(0,0)=0$
(compare with \cite{sugumoto98}) i.e.  $(0, 0)$ is a singular point of the function $\phi$. By a singular point of  a function we mean a critical point of the smooth function (see \cite{agv}).

We can define a height of the hypersurface $\Sigma$ at the point $v$ by $h(v, \Sigma):=h(\phi)$ (see Section \ref{newton} for the definition of a height of a smooth function).The number $h(v, \Sigma)$ can easily be seen to be
invariant under affine linear changes of coordinates in the ambient space $\mathbb{R}^3$ (see \cite{IMacta}).
 Then we define a height of the smooth hypersurface $\Sigma$ by the relation:
$h(\Sigma):=\sup_{v\in \Sigma} h(v, \Sigma)$. It is well known that $h(v, \Sigma)$ is an upper semi-continuous function on the two-dimensional surface $\Sigma$ (see \cite{IMacta}). Thus, actually the "supremum" is attained at a point of the compact set $\Sigma$ and we can write     $h(\Sigma):=\max_{v\in \Sigma} h(v, \Sigma)$ (see \cite{IMacta} for more detailed information).

Surely, similarly one can define $\Sigma$ in a neighborhood of the point $v=(0,\dots, 0, 1)\in \mathbb{R}^\nu$ as
the graph of a smooth function $1+\phi$ defined in a sufficiently small  neighborhood $U$ of the origin of $\mathbb{R}^{\nu-1}$ satisfying the conditions: $\phi(0)=0, \, \nabla\phi(0)=0$.

\section{Newton polyhedrons and adapted and linearly adapted coordinate systems}\label{newton}


In order to formulate our main results we need
 notions of a height and a linear height of a smooth function \cite{varchenko} (also see \cite{IM-adapted}).  Let $\phi$ be a smooth real-valued function defined in a neighborhood of the origin of $\mathbb{R}^2$ satisfying the conditions: $\phi(0)=0$ and $\nabla\phi(0)=0$. Consider
the associated Taylor series
\begin{eqnarray*}
\phi(x) \thickapprox \sum_{|\alpha|=2}^\infty c_{\alpha} x^\alpha
\end{eqnarray*}
of $\phi$ centered at the origin, where $x^\alpha=x_1^{\alpha_1}x_2^{\alpha_2}$.

The set
\begin{eqnarray*}
\mathcal{T}(\phi) := \{\alpha\in  \mathbb{Z}^2_+\setminus\{(0; 0)\}: c_{\alpha}:=\frac1{\alpha_1!\alpha_2!} \partial_1^{\alpha_1}\partial_2^{\alpha_2} \phi(0, 0)\neq0\}
\end{eqnarray*}
is called to be the Taylor support of $\phi$ at the origin. The Newton polyhedron or polygon $\mathcal{N}(\phi)$ of
$\phi$ at the origin is defined to be the convex hull of the union of all the octants $\alpha +\mathbb{ R}_+^2$, with
$\alpha\in \mathcal{T}(\phi)$.  A Newton diagram $\mathcal{D}(\phi)$ is the union of all edges of the Newton polygon.
 We use coordinate $t := (t_1, t_2)$ in the space $\mathbb{R}^2\supset \mathcal{N}(\phi)$.
The Newton distance in the sense of Varchenko \cite{varchenko}, or shorter distance, $d = d(\phi)$ between
the Newton polyhedron and the origin is given by the coordinate $d$ of the point $(d,  d)$ at which
the bi-sectrix $t_1 = t_2$ intersects the boundary of the Newton polyhedron. A principal face is the face of minimal dimension containing the point $(d, d)$.
Let $\gamma\in \mathcal{D}(\phi)$ be a face of the Newton polyhedron. Then the formal  power series (or a finite sum the in case $\gamma$ is a compact edge):
\begin{eqnarray*}
\phi_\gamma\thickapprox \sum_{\alpha\in \gamma} c_\alpha x^\alpha
\end{eqnarray*}
is called to be a part of Newton polyhedron corresponding to the face $\gamma$.

The part of Taylor series of the function $\phi$ corresponding to the principal face, which we denote by $\pi$,   is called to be a principal part $\phi_\pi$ of the function $\phi$.
If there exists a coordinate system for which the principal face is a point then we set $\mathfrak{m}=1$, otherwise $\mathfrak{m}=0$. The number $\mathfrak{m}$ is called to be a multiplicity of the Newton polyhedron. The multiplicity of the Newton polyhedron is well-defined (see  \cite{IMmon}).

 A height of a smooth
function $\phi$ is defined by \cite{varchenko}:
\begin{equation}\label{height}
h(\phi) := \sup \{d_y\},
\end{equation}
where the "supremum" is taken over all local coordinate system $y$ at the origin (it means  a smooth coordinate system defined near the origin which
preserves the origin), and where $d_y$ is the
distance between the Newton polyhedron and the origin in the coordinate $y$.

The coordinate system $y$ is called to be adapted to $\phi$ if $h(\phi)=d_y$, where $d_y$ is the Newton distance in the coordinate $y$. Existence of an adapted coordinate system was proved by Varchenko A.N.  for analytic functions of two variables in the paper \cite{varchenko} (also see \cite{IM-adapted}, where  analogical results are obtained  for smooth functions).

If we restrict ourselves with a linear change of variables, i.e.
\begin{eqnarray*}
h_{lin}(\phi) := \sup_{GL} \{d_y\},
\end{eqnarray*}
where $GL:=GL(\mathbb{R}^2)$ is the group of all linear transforms of $\mathbb{R}^2$,   then we come to a notion of a linear height of the function $\phi$ \cite{IMmon}.

Surely, $h_{lin}(\phi)\le h(\phi)$ for any smooth function $\phi$ with $\phi(0)=0$ and $\nabla\phi(0)=0$. If $h_{lin}(\phi)= h(\phi)$ then we say that the coordinate system $x$ is linearly adapted (LA) to $\phi$. Otherwise, if $h_{lin}(\phi)< h(\phi)$
then the coordinate system is not linearly adapted (NLA) to $\phi$. Note that we can define $h_{lin}(v, \Sigma):=h_{lin}(\phi)$. The notion $h_{lin}(v, \Sigma)$ is well-defined, that is,  it does not depend on linear change of coordinate  in the ambient space $\mathbb{R}^3$ (see \cite{IMmon}).

Further, we  mainly consider the case $h(\phi)\le 2$ although some results hold true for
arbitrary values of $h$.

\section{The main results}\label{hb2}

Further, we use notation
\begin{eqnarray*}
\Hess\phi(x):=
\begin{pmatrix}
\partial_1^2\phi(x)& \partial_1\partial_2\phi(x)\\
\partial_2\partial_1\phi(x)& \partial_2^2\phi(x)
  \end{pmatrix}.
\end{eqnarray*}
The  symmetric matrix $\Hess\phi(x)$ is called to be the Hessian matrix of the function $\phi$ at the point $x$.
The sharp estimates  for the operator $M_k$ in the  case when $\Hess\phi(0,  0) \not= 0$
  have been considered in the previous papers  \cite{sugumoto98} and \cite{Ikrs1}. In this paper we consider the case when $\Hess\phi(0,  0) =0$, more precisely, we assume that $\partial_1^{\alpha_1}\partial_2^{\alpha_2}\phi(0)=0$   for any $\alpha\in \mathbb{Z}_+^2$ with $|\alpha|\le2$  i. e.  the singularity of the function $\phi$ has co-rank two or equivalently rank zero  (see \cite{agv} for a definition of rank of a critical point). It means that both principal curvatures of the surface $\Sigma$ vanish at the point $(0, 0, 1)$.

Further, we need the following results.

\begin{prop}\label{LAcase}
Assume that
$\partial_1^{\alpha_1}\partial_2^{\alpha_2}\phi(0, 0)=0$ for any $\alpha\in \mathbb{Z}_+^2$ with $|\alpha|\le2$,  $h(\phi)\le2$ and   the coordinate system is not linearly adapted to $\phi$, i.e. $h_{lin}(\phi)<h(\phi)$. Then the following statements hold true:\\
\begin{enumerate}
\item[(i)] The function $\phi$ after possible linear change of variables can be written in the following form on
a sufficiently small neighborhood of the origin:
\begin{equation}\label{(1.2.2)}
    \phi(x_1, x_2)=b(x_1, x_2)(x_2-x_1^m\omega(x_1))^2+b_0(x_1),
\end{equation}
where $b,  b_0, \omega$ are smooth functions;\\
\item[(ii)]
The function $b$
 satisfies the conditions: $b(0,  0) =0, \, \partial_1 b(0, 0)\neq0,\, \partial_2 b(0, 0)=0$;\\
\item[(iii)]     $2\le m\in \mathbb{N}$ and $\omega$ is a smooth function satisfying the condition: $\omega(0) \not= 0$;\\
\item[(iv)]either  $b_0(x_1) = x_1^n\beta(x_1)$ with $2m+1<n<\infty$, where $\beta$ is a smooth function with $\beta(0) \not= 0$  (singularity of type $D_{n+1}$),  or  $b_0$ is a flat function (in the case when $b_0$ is a flat function we formally put $n=\infty$, singularity of type $D_\infty$);\\
\item[(v)]  $h(\phi) = 2n/(n+1)$ ( $h(\phi)=2$, when  $n=\infty$) and  $h_{lin}(\phi)=(2m+1)/(m+1)$.
\end{enumerate}
Conversely, if the conditions  (i)-(iv) are fulfilled  then $\partial^{\alpha_1}\partial^{\alpha_2}\phi(0, 0)=0$ for any $\alpha\in \mathbb{Z}_+^2$ with $|\alpha|\le2$ and $h(\phi)\le2$. Moreover, the inequality $h_{lin}(\phi)<h(\phi)$ holds true.
\end{prop}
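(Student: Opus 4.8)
The plan is to combine Varchenko's Newton-polyhedron calculus \cite{varchenko} with the theory of (linearly) adapted coordinate systems for functions of two variables developed in \cite{IMmon}, \cite{IM-adapted}, specialised to the corank-two situation. For the forward implication, first make a linear change of coordinates that brings us into a linearly adapted coordinate system, so that $d_x=\hl(\phi)$. Since every derivative of $\phi$ of order $\le2$ vanishes at the origin, $\phi$ vanishes there to order at least three, so $d_x\ge3/2$; thus by hypothesis $3/2\le d_x=\hl(\phi)<h(\phi)\le2$. Because these coordinates are not adapted, Varchenko's criterion forces the principal face $\pi$ of $\N(\phi)$ to be a compact edge and the associated quasi-homogeneous polynomial $\phi_\pi$ to have a real root of multiplicity $N>d_x$, whence $N\ge2$.

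The algebraic heart of the matter is to identify $\phi_\pi$ exactly. Write the supporting line of $\pi$ as $\ka_1t_1+\ka_2t_2=1$ with $0<\ka_1\le\ka_2$, so $d_x=1/(\ka_1+\ka_2)\in[3/2,2)$, and factor $\phi_\pi=c\,x_1^{a_1}x_2^{a_2}\prod_j(x_2^{q}-\mu_jx_1^{p})^{b_j}$ with $p/q=\ka_2/\ka_1$ in lowest terms and the $\mu_j$ distinct and nonzero. The corank-two hypothesis together with the fact that $\N(\phi)$ lies above this line excludes the monomials $x_1^3$ and $x_1^2x_2$ from $\phi$, so the cubic part of $\phi$ is $cx_1x_2^2+\epsilon x_2^3$; a short analysis of the factorisation, using $N\ge2$ and $1<d_x<2$, then shows that the high-multiplicity root cannot lie on a coordinate axis, hence that there is exactly one factor, with $q=1$ and exponent $2$, and that $a_1=1$, i.e.\ $\phi_\pi(x_1,x_2)=cx_1(x_2-\la x_1^m)^2$ with $c,\la\neq0$ and $m\in\NN$. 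Moreover $m\ge2$, since for $m=1$ the linear change $x_2\mapsto x_2-\la x_1$ would raise the Newton distance, contradicting linear adaptedness. This already gives $d_x=(2m+1)/(m+1)$, hence $\hl(\phi)=(2m+1)/(m+1)$ and the assertion $m\ge2$ in (iii); and a final unipotent linear change $x_1\mapsto x_1-(\epsilon/c)x_2$, which leaves $\pi$, $\phi_\pi$ and $d_x$ unchanged but kills the monomial $x_2^3$, lets us assume from now on that $\partial_2^3\phi(0,0)=0$.

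Next I perform a Weierstrass-type preparation about the root $x_2=\la x_1^m$. Setting $x_2=x_1^mu$, the inclusion $\N(\phi)\subset\{\ka_1\al_1+\ka_2\al_2\ge1\}$ ensures that $x_1^{-(m+1)}\partial_2\phi(x_1,x_1^mu)$ equals a function $\tilde h(x_1,u)$, smooth (resp.\ real-analytic) near the origin, with $\tilde h(0,u)=2c(u-\la)$; the classical implicit function theorem then yields a smooth $u=\omega(x_1)$, $\omega(0)=\la\neq0$, for which $x_2=\Psi(x_1):=x_1^m\omega(x_1)$ is a smooth critical curve of $x_2\mapsto\phi(x_1,x_2)$. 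Expanding $\phi$ in $x_2$ about $x_2=\Psi(x_1)$, the first-order term drops out and one obtains $\phi(x_1,x_2)=b(x_1,x_2)\bigl(x_2-x_1^m\omega(x_1)\bigr)^2+b_0(x_1)$ with $b_0(x_1)=\phi(x_1,\Psi(x_1))$, $b(0,0)=\tfrac12\partial_2^2\phi(0,0)=0$, $\partial_1b(0,0)=c\neq0$ and $\partial_2b(0,0)=\tfrac16\partial_2^3\phi(0,0)=0$; this proves (i), (ii) and the remaining part of (iii). In the $C^\infty$ (non-analytic) setting, the smoothness of $\tilde h$ and of everything built from it is supplied by the finite-type Newton-polyhedron estimates of \cite{IMmon}.

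It remains to analyse $b_0$ and to compute $h(\phi)$. As a function of $x_1$ alone, $b_0$ is either flat---put $n=\infty$---or $b_0(x_1)=x_1^n\beta(x_1)$ with $\beta(0)\neq0$ and $n\in\NN$. In the coordinates $(x_1,y_2)$, $y_2=x_2-x_1^m\omega(x_1)$, one has $\phi=\tilde b(x_1,y_2)y_2^2+b_0(x_1)$ with $\tilde b=cx_1+O(|x_1|^2+|y_2|^2)$, and a direct reading of the Newton polygon shows that its Newton distance is $2n/(n+1)$ (resp.\ $2$ when $b_0$ is flat), with principal face the compact segment joining $(1,2)$ and $(n,0)$ (resp.\ the non-compact edge $\{t_2=2\}$); since the corresponding principal part $x_1\bigl(cy_2^2+\beta(0)x_1^{n-1}\bigr)$ has no real root of multiplicity exceeding one, these coordinates are adapted and $h(\phi)=2n/(n+1)$ (resp.\ $h(\phi)=2$). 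Comparing with $\hl(\phi)=(2m+1)/(m+1)$, the hypothesis $\hl(\phi)<h(\phi)$ becomes $(2m+1)(n+1)<2n(m+1)$, i.e.\ $n>2m+1$, which proves (iv) and completes (v). For the converse one runs the last two steps backwards: if (i)--(iv) hold, then the cubic part of $\phi$ is $\partial_1b(0,0)\,x_1x_2^2$, so $\phi$ has corank two at the origin; the $(x_1,y_2)$-computation gives $h(\phi)=2n/(n+1)\le2$ (or $2$); and $\phi_\pi=\partial_1b(0,0)\,x_1(x_2-\omega(0)x_1^m)^2$ with $m\ge2$ gives $\hl(\phi)=(2m+1)/(m+1)$, so $n>2m+1$ forces $\hl(\phi)<h(\phi)$. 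I expect the crux to lie in the second and third steps: pinning down $\phi_\pi$, and above all producing a genuinely smooth critical curve at a point where $\Hess\phi$ vanishes---the rescaling $x_2=x_1^mu$ being precisely the device that repairs the failure of the classical implicit function theorem there.
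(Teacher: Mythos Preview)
Your approach is correct and close in spirit to the paper's, but organised differently. The paper first records a general classification (Proposition~\ref{normform}, imported from \cite{IMmon}) listing the possible normal forms according to the value of $\mathfrak{n}(\phi_3)$, and then proves the proposition by elimination: it shows that the cases $\mathfrak{n}(\phi_3)=1$, $\mathfrak{n}(\phi_3)=3$, and $\phi_3\equiv0$ each force $h_{lin}(\phi)=h(\phi)$ once $h(\phi)\le2$, so only the $D$-type case $\mathfrak{n}(\phi_3)=2$ with $2m+1<n$ survives. You instead work directly from Varchenko's non-adaptedness criterion and the factorisation of $\phi_\pi$ to pin down the principal part, and then carry out the Weierstrass-type preparation explicitly via the blow-up $x_2=x_1^mu$. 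Your route is more self-contained (you do not invoke the ready-made normal-form list), while the paper's is shorter because that list is quoted wholesale from \cite{IMmon}.

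One place to tighten: your ``short analysis of the factorisation'' is carrying real weight and deserves a few more lines. The exclusion of $(3,0)$ and $(2,1)$ from $\mathcal T(\phi)$ needs $\kappa_1<\kappa_2$ strictly, which you have not yet secured (a priori $\kappa_1=\kappa_2=1/3$ with $d_x=3/2$ is not ruled out). The fix is quick: since $d_x<2$ forces $\phi_3\not\equiv0$, a preliminary linear change placing the multiple root of $\phi_3$ along $x_2=0$ already gives $d_x>3/2$ (no degree-three monomial then lies to the right of the bisectrix), hence any linearly adapted system has $\kappa_1<\kappa_2$. Similarly, the assertions $a_1=1$, $q=1$, ``exactly one factor with exponent $2$'' each merit a sentence: $a_1\le1$ and $a_2\le1$ follow from $d_x<2$ via the vertex coordinates of the edge; then $(1,2)\in\mathcal T(\phi)$ and the exclusion of $(0,3)$ from the edge pin the upper-left vertex at $(1,2)$, so $a_2+q\sum_j b_j=2$, and combining this with $\max_j b_j\ge2$ forces $q=1$, a single factor with $b_1=2$, and $a_2=0$. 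With these details filled in your argument is complete.
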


\begin{remark}
It is easy to show that the numbers $m, n$ are well-defined for arbitrary smooth function $\phi$ having $D$ type singularity (see  \cite{IMmon} also see Proposition \ref{normform}). Thus, to each point $v\in \Sigma$ of surface with $D$ type singularity  we can attach a pair $(m(v), n(v))$ due to the Proposition \ref{normform}.

It should be worth  to note that in the case $\Hess\phi(0, 0)\neq0$  there is one more  class of functions (namely, the class of smooth functions having singularity of type $A$), which has no linearly adapted coordinate system (see \cite{Ikrs1}). So, if $h(\phi)\le2$ and the phase function $\phi$ has no linearly adapted coordinate system then   necessarily it has either $A$ or $D$ type singularities. Actually, the case $A$ was treated in the previous papers \cite{Ikrs1} and \cite{sugumoto98}.
\end{remark}

Further, we shall work under the following  $R-$ condition: If  $\phi$ has singularity of type $D_\infty$ (e.g. if $b_0$ is a flat function at the origin) then $b_0\equiv0$. Surely, if $\phi$ is a real analytic function then   the $R-$ condition is fulfilled automatically (compare with $R-$condition proposed in  \cite{IMmon} for more general smooth functions, which is defined in terms of the Newton polyhedrons).

Our main results are the following:

\begin{thm}\label{main} If $\phi$ is a  smooth function defined by \eqref{defphi} with $h(\phi)\le2$  and rank of singularity at the origin is zero and
$1\le p\le 2$ is a fixed number then
\begin{eqnarray*}
 k_p(v) := \left(6-\frac2h\right)\left(\frac1p-\frac12\right),
\end{eqnarray*}
except the case when $\phi$ has singularity of type $D_{n+1}$ with
 $2m + 1< n$, where $v=(0, 0, 1)$.

Moreover, if the smooth function $\phi$  satisfies the $R-$condition and  has $D_{n+1}$ type singularity at the origin,  with $2m+1<n\le \infty$, then
\begin{eqnarray*}
 k_p(v) :=\max\left\{ \left(5-\frac1{2m+1}\right)\left(\frac1p-\frac12\right), \left(6-\frac{2m+2}{n}\right)\left(\frac1p-\frac12\right)+\frac{2m+1}{2n}-\frac12\right\}.
\end{eqnarray*}

\end{thm}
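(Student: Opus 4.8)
The proof divides into the upper bound — boundedness of $M_k$ for $k$ above the stated value — and the matching lower bound, after a reduction to a normal form. By \cite{sugumoto98} and \cite{Ikrs1} the cases $\Hess\phi(0,0)\neq0$ and that of an $A$-type singularity are already settled, and by Proposition \ref{LAcase} and the remark following it the only remaining rank-zero phase with $h(\phi)\le2$ and no linearly adapted coordinate system is the $D_{n+1}$ one with $2m+1<n$, in which case $\phi$ has the form \eqref{(1.2.2)}; moreover one checks that for a $D$-type phase one always has $2m+1\le n$, with equality precisely when the coordinates are linearly adapted. Hence it suffices to treat (a) the linearly adapted situation — where $h_{lin}(\phi)=h(\phi)$ and $\phi$ may be analysed directly in the given coordinates, and the target is the first formula $k_p(v)=(6-2/h)(1/p-1/2)$ — and (b) the $D_{n+1}$ case with $2m+1<n$ (including $n=\infty$ under the $R$-condition), where the target is the second formula and one uses the algebraic identity $5-1/(2m+1)=6-2/h_{lin}$, so that its first entry is just the generic expression with $h$ replaced by $h_{lin}$.

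For the upper bound I would carry out a Littlewood--Paley decomposition $M_k=\sum_{j\ge0}M_k^{(j)}$ in the frequency variable, $M_k^{(j)}$ having symbol supported where $|\xi|\sim2^j$; the finitely many low-frequency pieces are harmless. Using the homogeneity of $\varphi$ and of $a_k$, each $M_k^{(j)}$ with $j$ large rescales to a fixed oscillatory convolution operator, and after extracting a factor $2^{j(3-k)}$ its kernel becomes an oscillatory integral whose decay away from the conormal directions of $\Sigma\cap\Gamma$ is governed — via a stationary-phase reduction to the level set $\Sigma$ — by the curvature of $\Sigma$ near $v$, i.e.\ ultimately by Fourier transforms of surface-carried measures on $\Sigma\cap\Gamma$. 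The heart of the matter is therefore sharp $L^q$- and decay estimates for these. To obtain them I would decompose the domain $U$ of $\phi$ into finitely many anisotropic dyadic pieces attached to the compact edges and to the principal face of the Newton polyhedron of $\phi$ in an adapted coordinate system — for case (b) the nonlinear change $x_2\mapsto x_2-x_1^m\omega(x_1)$ of \eqref{(1.2.2)} — following the resolution-of-singularities scheme of Varchenko \cite{varchenko} and of Ikromov--Müller \cite{IMmon}. On each piece an anisotropic rescaling turns $\phi$ into a small perturbation of a homogeneous model, van der Corput and stationary-phase estimates give the local decay, and the worst piece yields the exponent $1/h$ (respectively $1/h_{lin}$, and the $n$-dependent rate). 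Reassembling the pieces and interpolating, by Stein's complex-interpolation method applied to the analytic family obtained by damping the density near the singular curve $\{x_2=x_1^m\omega(x_1)\}$ (respectively near the origin), between the resulting kernel/$L^1\to L^\infty$-type bound and the elementary estimate $\|M_k^{(j)}\|_{L^2\to L^2}\approx2^{-jk}$, and then summing in $j$, gives $L^p\to L^{p'}$ boundedness exactly for $k$ above the claimed value. In case (b) two pieces dominate: the central piece, a thin neighbourhood of the curve $x_2=x_1^m\omega(x_1)$ on which $\phi$ is of $A$-type and which produces the first entry $(6-2/h_{lin})(1/p-1/2)$, and the peripheral piece on which $\phi\approx b_0(x_1)=x_1^n\beta(x_1)$ which, through the anisotropic Jacobian picked up under rescaling, produces the second entry together with its additive correction $\tfrac{2m+1}{2n}-\tfrac12$; every transition piece is checked to be dominated by these two.

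For the lower bound I would use Knapp-type examples. In case (a) one lets $\widehat f$ be a bump adapted to an anisotropic cap of $\Sigma\cap\Gamma$ whose side lengths are dictated by the principal weight of $\phi$, so that $\varphi$ is constant on the cap up to an error $\lesssim1$; estimating $\|f\|_p$ from the cap volume and bounding $\|M_kf\|_{p'}$ from below on the dual region forces $k\ge(6-2/h)(1/p-1/2)$. In case (b) one needs two families: a cap adapted to the $A$-type behaviour along $x_2=x_1^m\omega(x_1)$, which forces the first entry of the maximum, and a longer, thinner slab adapted to the $x_1^n$-behaviour of $b_0$, which forces the second; together these show $k_p(v)$ is at least the stated maximum, while (as the two entries are both $\le(6-2/h)(1/p-1/2)$) no stronger lower bound holds. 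Upper semicontinuity of $v\mapsto k_p(v)$ then follows from these explicit formulas and the known upper semicontinuity of $h(v,\Sigma)$, $h_{lin}(v,\Sigma)$ and of the discrete data $m(v),n(v)$.

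The principal obstacle is case (b): since the given coordinates are not linearly adapted, a Newton-polygon decomposition performed in them does not see the true height $h$, so one must either work in the nonlinear adapted coordinates — where the dilation structure behind the homogeneity of $\varphi$ is broken and the rescaled pieces are only approximately of model type — or run a genuine two-scale decomposition and control the interactions between the central and peripheral regions. Forcing the upper and lower bounds to meet at the same maximum requires careful bookkeeping of the powers of the rescaling parameters in all the norms involved (this is where the correction $\tfrac{2m+1}{2n}-\tfrac12$ appears), the verification that no intermediate piece beats the two extremal ones, and, in the flat case $n=\infty$, an essential use of the $R$-condition $b_0\equiv0$ to discard an otherwise uncontrolled remainder.
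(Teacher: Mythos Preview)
Your lower-bound strategy via Knapp-type caps is the same as the paper's (Theorems \ref{lowest} and \ref{NLA}). Note, however, that the paper's second family in case (b) is centred on the curve $\xi_2/\varphi=(\xi_1/\varphi)^m\omega(\xi_1/\varphi)$ at scale $\xi_1\sim 2^{-j/n}$, and the additive correction $\tfrac{2m+1}{2n}-\tfrac12$ only appears after an application of stationary phase in the transversal variable $z_2$; your ``longer, thinner slab'' description does not yet capture this step. There is also a slip in your reduction: it is \emph{not} true that a $D$-type phase always has $2m+1\le n$; rather (see the discussion after Proposition \ref{normform}) the coordinates are linearly adapted iff $2m+1\ge n$, so both regimes genuinely occur.

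Your upper-bound route, on the other hand, is genuinely different from the paper's and substantially heavier. The paper performs no domain decomposition of $U$ and no damped complex interpolation on the operator side. Instead it invokes Sugimoto's machinery: Propositions \ref{Sugi1} and \ref{Sugi2} reduce everything to bounds on the two-variable oscillatory integral $I(\lambda,s)$. The uniform estimate $|I(\lambda,s)|\lesssim\lambda^{-1/h}(\log\lambda)^{\mathfrak m}$, quoted as a black box from \cite{IM-uniform} and \cite{karpushkin}, is fed into Proposition \ref{Sugi2} and already yields the full upper bound \eqref{upperbound} in every case (Theorem \ref{preest}). For case (b) the additional input is the Randol maximal-function estimate $\sup_{\lambda>1}\lambda^{1/2+1/(m+1)}|I(\lambda,\cdot)|\in L^{2m+2-0}_{loc}$ (Theorem \ref{anasug}, imported from \cite{akrikr}); via Proposition \ref{Sugi1} this gives an $L^{p_0}\to L^{p_0'}$ bound at the intermediate exponent $p_0=(4m+4)/(4m+3)$, and ordinary analytic interpolation in $k$ between the three endpoints $p_0$, $p_1=1$, $p_2=2$ then produces the stated maximum. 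Your Littlewood--Paley plus Newton-polygon decomposition would in effect re-prove the contents of \cite{IM-uniform} and \cite{akrikr} inside the $M_k$ framework; this is viable, but be aware that a bare two-endpoint interpolation between an $L^1\to L^\infty$ kernel bound and the $L^2$ bound does not by itself detect the first entry $(5-1/(2m+1))(1/p-1/2)$ --- in the paper's argument that entry comes precisely from the intermediate $p_0$ endpoint furnished by the Randol estimate, and in your scheme it would have to emerge from the ``central piece'' analysis, which you have not spelled out.
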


\begin{prop}\label{LAcase}
If the coordinate system is linearly adapted to $\phi$ that is  $h_{lin}(\phi)=h(\phi)$ then
 the following relation holds true:
\begin{eqnarray*}
k_p(v)= \left(6-\frac2h\right)\left(\frac1p-\frac12\right).
\end{eqnarray*}
\end{prop}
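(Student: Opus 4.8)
The plan is to obtain this as a consequence of Theorem \ref{main} together with the structural dichotomy for rank-zero singularities of height at most two that is recorded above. The point is that, in the regime we are in (rank of the singularity at the origin equal to zero and $h(\phi)\le 2$), whether $\phi$ admits a linearly adapted coordinate system is completely determined by the type of the singularity. Indeed, by the converse direction of the normal-form Proposition above (the one characterizing the case $h_{\rm lin}(\phi)<h(\phi)$), a function that can be brought by a linear change of variables into the form \eqref{(1.2.2)} with the accompanying conditions (ii)--(iv) necessarily satisfies $h_{\rm lin}(\phi)<h(\phi)$, and by the direct direction these are the \emph{only} functions for which this strict inequality holds. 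Equivalently, in this regime $\phi$ has no linearly adapted coordinate system exactly when it has a singularity of type $D_{n+1}$ with $2m+1<n\le\infty$.

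Granting this, the argument is short. The hypothesis $h_{\rm lin}(\phi)=h(\phi)$ rules out precisely the exceptional case that is excluded from the first assertion of Theorem \ref{main}: a linearly adapted $\phi$ cannot be of type $D_{n+1}$ with $2m+1<n$, nor of type $D_\infty$. Hence the first formula of Theorem \ref{main} applies verbatim and gives $k_p(v)=(6-2/h)(1/p-1/2)$, which is the claim. Note that the $R$-condition and the second (maximum) formula of Theorem \ref{main} play no role here, since they are only needed on the $D_{n+1}$-with-$2m+1<n$ branch that has just been excluded; the remaining $D$-type subcases (for instance $D_{n+1}$ with $n\le 2m+1$, or $D_4$) are linearly adapted and lie outside that branch, so the formula is consistent for them as well.

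The step I would treat as the main obstacle is making sure the equivalence in the first paragraph is a genuine equivalence and not merely one implication --- one must know that a linearly adapted rank-zero function of height $\le 2$ can never happen to be of type $D_{n+1}$ with $2m+1<n$. This is exactly the content of the converse part of the normal-form Proposition (conditions (i)--(iv) force $h_{\rm lin}<h$), so once that Proposition is available the present statement follows with no further work.

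If instead one wanted a proof that does not pass through Theorem \ref{main}, the natural route is the classical two-sided argument. For the upper bound one decomposes $a_k$ dyadically in $|\xi|$, rescales each piece to a fixed annulus, and estimates the resulting oscillatory integral of large parameter $\lambda=2^j$ and phase $\varphi(\xi)+x\cdot\xi$ by Varchenko-type van der Corput estimates, which in linearly adapted coordinates give a gain governed directly by $h=h(\phi)$; combining the resulting kernel bound with the trivial $L^2\to L^2$ estimate and interpolating yields $L^p\to L^{p'}$-boundedness for $k>(6-2/h)(1/p-1/2)$, while testing $M_k$ on Knapp-type wave packets adapted to the principal face of the Newton polygon of $\phi$ gives the matching lower bound. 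The delicate point in that route is the uniformity of the oscillatory-integral estimate in the spatial variable $x$, in particular near the normal direction to $\Sigma$ at $v$ and at the transitional dyadic scales; since Theorem \ref{main} already carries out this analysis, the corollary argument above is the shortest path.
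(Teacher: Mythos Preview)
Your argument via Theorem \ref{main} and the structural dichotomy is logically sound in the stated regime (rank-zero singularity, $h(\phi)\le 2$), and your reading of the normal-form Proposition as a genuine equivalence is correct. However, the paper does not take this detour. Its proof is exactly the ``alternative'' route you sketch in your final paragraph: Theorem \ref{preest} gives the upper bound $k_p(v)\le (6-2/h)(1/p-1/2)$ via the uniform oscillatory-integral estimate of \cite{IM-uniform} combined with Sugimoto's Proposition \ref{Sugi2}, and Theorem \ref{lowest} gives the matching lower bound $k_p(v)\ge (6-2/h_{\rm lin})(1/p-1/2)$ via the Knapp-type test sequence; when $h_{\rm lin}=h$ these two bounds coincide and the proposition follows immediately. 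This direct route is both shorter and strictly more general: as the paper remarks, it does not require $h(\phi)\le 2$ (nor the rank-zero assumption), whereas your derivation through Theorem \ref{main} inherits both restrictions. There is also a mild inversion of the logical flow in your approach, since in the paper Theorem \ref{main} is itself deduced from Theorems \ref{preest}, \ref{lowest}, and \ref{NLA}; so you are passing through a stronger and later result to obtain a weaker and earlier one.
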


\begin{proof}A proof of the Proposition  \ref{LAcase} follows from Theorems \ref{preest} and \ref{lowest}.
\end{proof}
Note that the statement of the Proposition  \ref{LAcase} holds true for arbitrary smooth function for which there exists a linearly adapted coordinate system. More precisely,  there is  no any restriction  $h(\phi)\le 2$, provided $h_{lin}(\phi)=h(\phi)$. The results of the Proposition \ref{LAcase}
agree with the corresponding results on the Fourier restriction estimates proved in \cite{IM-uniform}.

The Proposition \ref{LAcase} yields the following corollary

\begin{cor}\label{convexcase}
If  the surface $\Sigma\subset \mathbb{R}^3$ is a smooth convex hypersurface then
 the following relation holds true:
\begin{eqnarray*}
k_p(v)= \left(6-\frac2{h(\Sigma)}\right)\left(\frac1p-\frac12\right).
\end{eqnarray*}
\end{cor}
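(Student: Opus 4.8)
The plan is to reduce Corollary \ref{convexcase} to Proposition \ref{LAcase} by showing that for a smooth convex hypersurface $\Sigma\subset\mathbb{R}^3$, any chart of the form \eqref{defphi} automatically comes with a linearly adapted coordinate system, so that $h_{lin}(\phi)=h(\phi)$ holds at every point $v\in\Sigma$. The key structural fact is that convexity of $\Sigma$ near $v$ is equivalent (after the normalisation $\nabla\phi(0,0)=0$) to the graphing function $\phi$ being a convex function in a neighbourhood of the origin. For a convex function of two variables vanishing to second order at the origin, the Taylor support $\mathcal{T}(\phi)$ and the Newton polyhedron $\mathcal{N}(\phi)$ are highly constrained: convexity forbids the presence of ``mixed'' odd monomials that would otherwise destroy positivity, and one shows that the principal part $\phi_\pi$ is (after at most a linear change of variables) a nonnegative form. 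The upshot I want is that the given coordinate system — or a linear modification of it — is already adapted, hence in particular linearly adapted.

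Concretely, the first step is to invoke the classical criterion (Varchenko, and in the smooth category the refinements in \cite{IM-adapted}) that a coordinate system fails to be adapted only when the principal face of $\mathcal{N}(\phi)$ is a compact edge whose associated quasi-homogeneous polynomial $\phi_\pi$ has a root of multiplicity exceeding the Newton distance $d$; equivalently, in the two-variable case, non-adaptedness forces $\phi$ to have, in suitable linear coordinates, a factor $(x_2-\psi(x_1))^{\ell}$ with $\ell>d$ after a change of variables. The second step is to rule this out under convexity: if $\phi$ is convex and $\phi_\pi$ is its principal (quasi-homogeneous) part, then $\phi_\pi\ge0$, and a nonnegative quasi-homogeneous polynomial in two variables cannot have a real linear factor of multiplicity larger than its distance unless that factor already accounts for all of $\phi_\pi$ with even multiplicity — in which case a linear change of variables puts the principal face in a position making the coordinates adapted. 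Either way one concludes $h_{lin}(\phi)=h(\phi)$. The third step is then purely mechanical: apply Proposition \ref{LAcase} at each $v$, and use $h(v,\Sigma)=h(\phi)$ together with $h(\Sigma)=\max_{v\in\Sigma}h(v,\Sigma)$; since the formula $(6-2/h)(1/p-1/2)$ is increasing in $h$ and $k_p(\Sigma)=\max_v k_p(v)$ as recalled in the introduction, replacing $h(v,\Sigma)$ by $h(\Sigma)$ is legitimate at the maximising point, and the stated identity follows.

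I expect the main obstacle to be the second step: carefully establishing that convexity of $\phi$ excludes the non-linearly-adapted scenarios of Proposition \ref{LAcase} (the $A$- and $D$-type singularities flagged in the remark) and, more generally, that no convex smooth function of two variables can fail to have a linearly adapted coordinate system. For $D$-type singularities this is visible from the normal form \eqref{(1.2.2)}: the term $b(x_1,x_2)(x_2-x_1^m\omega(x_1))^2+b_0(x_1)$ with $b(0,0)=0$, $\partial_1 b(0,0)\ne0$, forces $b$ to change sign near the origin, so $\phi$ takes both signs and cannot be convex; a similar sign-change argument disposes of the $A$-type case. One should also address the convex functions with $h(\phi)>2$, for which the cited Proposition has no $h\le2$ restriction — here I would lean on the general principle, consistent with the Fourier restriction results of \cite{IM-uniform}, that convexity always yields a linearly adapted chart, reducing again to Proposition \ref{LAcase}. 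A minor technical point to handle is that \eqref{defphi} involves a preliminary linear transform in $\mathbb{R}^3_\xi$ fixing $v$; one must check this transform preserves both convexity and the height, which is immediate since affine changes in the ambient space leave $h(v,\Sigma)$ invariant (as already noted in the excerpt) and carry convex surfaces to convex surfaces.
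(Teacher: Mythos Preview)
Your approach is exactly the paper's: the authors simply write ``Proposition \ref{LAcase} yields the following corollary'' with no further argument, so the entire content is the implication \emph{convex $\Rightarrow$ linearly adapted}, which you correctly identify and attempt to justify. Your reduction via $k_p(\Sigma)=\max_v k_p(v)$ and the monotonicity of $(6-2/h)(1/p-1/2)$ in $h$ is sound, and your $D$-type sign-change argument is correct: since $b(0,0)=0$ and $\partial_1 b(0,0)\neq0$, along $x_2=\psi(x_1)+|x_1|^m$ the dominant term $b\cdot|x_1|^{2m}\sim c\,x_1^{2m+1}$ changes sign (and dominates $b_0$ because $n>2m+1$), so $\phi$ is not one-signed and hence not convex.

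The one point to sharpen is the $A$-type case. There $b(0,0)\neq0$, so $\phi$ need \emph{not} change sign: for instance $\phi=(x_2-x_1^2)^2+x_1^6$ is nonnegative yet has $h_{lin}=4/3<3/2=h$. The obstruction to convexity here is not a sign change of $\phi$ but of $\det\Hess\phi$: a short computation for the model $b_2(0)(x_2-\psi(x_1))^2+b_0(x_1)$ with $\psi(x_1)\sim c_m x_1^m$, $m\ge2$, $b_0(x_1)\sim\beta_n x_1^n$, $n>2m$, gives
\[
\det\Hess\phi(x_1,x_2)\;\sim\;-4b_2(0)^2\,m(m-1)\,x_1^{\,m-2}\bigl(x_2-c_m x_1^m\bigr)\,+\,O(x_1^{2m-2}),
\]
which is negative on one side of the curve $x_2=\psi(x_1)$ for small $x_1\neq0$; hence $\phi$ cannot be convex. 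With this adjustment your argument closes, and the appeal to Proposition \ref{LAcase} (which, as the paper notes, carries no restriction $h\le2$) finishes the proof.
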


Note that the Corollary \ref{convexcase} improves the results proved in the paper \cite{sugumoto94} (see page no. 522 Theorem 1) in the three-dimensional case.

\begin{remark} Suppose $\phi$ has $D$ type singularity at a point. Then, as noted before, the pair of positive integers  $(m, n)$ is well-defined.  Moreover,  the condition $2m+1 \ge n$ corresponds to the linearly adapted coordinate
system introduced in \cite{IMmon}. It means that in the relation \eqref{height} the "supremum" is attained in a linear change
of variables. So, there exists a linear change of variables y such that $d_y = h(\phi)$ under the condition
$2m + 1\ge n$. Moreover, if $2m + 1 < n$  then for any linear change of variables $y$ we have $d_y < h(\phi)$.
\end{remark}

{\bf Conventions:}  Throughout this article, we shall use the variable constant notation,
i.e., many constants appearing in the course of our arguments, often denoted by
$c, C, \varepsilon, \delta$; will typically have different values at different lines. Moreover, we shall use symbols
such as $\sim, \lesssim;$ or $<<$ in order to avoid writing down constants, as explained in \cite{IMmon} (
Chapter 1). The symbol $\lesssim_g$ means that the constant depends on $g$.  By $\chi_0$ we shall denote a non-negative smooth cut-off function on $\mathbb{R}^\nu$ with typically
small compact support which is identically $1$ on a small neighborhood of the origin.

\section{Preliminaries}

We define the Fourier operator and its inverse by the following \cite{stein-book}:
\begin{eqnarray}\nonumber
F(u)(\xi):=\frac1{\sqrt{(2\pi)^\nu}}\int_{\mathbb{R}^\nu} e^{ i\xi \cdot x}u(x)dx,
\end{eqnarray}
and
\begin{eqnarray}\nonumber
u(x):=\frac1{\sqrt{(2\pi)^\nu}}\int_{\mathbb{R}^\nu} e^{-i \xi\cdot x} F(u)(\xi)d\xi
\end{eqnarray}
respectively for a Schwartz function $u$, where $\xi\cdot x$ is the  usual inner product of the vectors $\xi$ and $x$. Then the  Fourier transform and inverse Fourier transform of a distribution are  defined by the standard arguments.

Note that the  boundedness problem for the convolution operators is related to behaviour  of the following convolution kernel:
\begin{eqnarray}\nonumber
K_k:=F^{-1}(e^{i\varphi}a_k),
\end{eqnarray}
which is defined as the inverse Fourier transform of the corresponding distribution.

It is well known that (see \cite{sugumoto98}) the main contribution to $K_k$ gives points $x$ which belongs to a sufficiently small neighborhood of the set $-\nabla \varphi(\supp(a_k)\setminus\{0\})$.

In the paper \cite{sugumoto98} it had been shown a relation between the boundedness of the convolution operator $M_{k}$ and behaviour of the following oscillatory integral:
\begin{eqnarray*}\nonumber
I(\lambda, s)=\int_{\mathbb{R}^{\nu-1}}e^{i\lambda(\phi(x)+s\cdot x)}g(x)dx,
 (\lambda>0,\, z\in \mathbb{R}^{\nu-1}),
\end{eqnarray*}
where $ g\in C_{0}^{\infty}(U)$ and $U$  is a sufficiently small neighborhood of the origin.

More precisely,  the following statements  are proved \cite{sugumoto98}:

 \begin{proposition} \label{Sugi1}
 Let $q\geq 2$ and $\gamma\geq 0$.  Suppose for all $g\in C_{0}^ {\infty}(U)$ and $\lambda>1$,
\begin{equation}\label{averbound}
\|I(\lambda,\cdot)\|_{L^{q}(\mathbb{R}_{s}^{\nu-1})}\lesssim_g\lambda^{-\gamma}.
\end{equation}
 Then
$K_{k}(\cdot):=F^{-1}[e^{i\varphi }a_{k}](\cdot)\in
L^{q}(\mathbb{R}^{\nu})$ and $M_{k}:L^{p}(\mathbb{R}^{\nu}) \rightarrow
L^{p'}(\mathbb{R}^{\nu})$ is the bounded  operator for $p=\frac{2q}{2q-1}$, if
$k>\nu-\gamma-\frac{1}{q}$.
 \end{proposition}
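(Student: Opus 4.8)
The plan is to reduce the $L^p\to L^{p'}$ boundedness of $M_k$ to an $L^q$-bound for the oscillatory integral $I(\lambda,s)$ via the standard duality pairing, exactly along the lines of Sugimoto's argument in \cite{sugumoto98}, and I sketch the derivation here. First I would recall that, as noted after \eqref{convoper}, the $L^p\to L^{p'}$ boundedness of $M_k$ is equivalent, by Young's inequality and the $TT^*$ method, to membership of the kernel $K_k=F^{-1}[e^{i\varphi}a_k]$ in $L^q(\mathbb{R}^\nu)$ with $q$ the exponent conjugate in the sense $p=\tfrac{2q}{2q-1}$ (so that $q=\tfrac{p'}{2}$); this is where the hypothesis $q\geq 2$, equivalently $1\le p\le\tfrac43$, enters. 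Indeed $M_k=K_k*\,\cdot\,$, and $\|K_k*f\|_{p'}\lesssim\|K_k\|_q\|f\|_p$ requires $1+\tfrac1{p'}=\tfrac1q+\tfrac1p$, i.e. $\tfrac1q=1-\tfrac1p+\tfrac1{p'}=1-\tfrac2p+1=2-\tfrac2p$, which rearranges to $p=\tfrac{2q}{2q-1}$. So the whole problem is to show $K_k\in L^q$.

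Next I would localize: since $a_k$ is supported in the small conic neighborhood $\Gamma$ and $\varphi$ is smooth and nonvanishing there, a dyadic decomposition $a_k=\sum_{j\ge 0}a_k(2^{-j}\cdot)\chi(\cdot)$ in the radial variable $|\xi|\sim 2^j$ reduces $K_k$ to a sum of pieces; on each piece one rescales $\xi\mapsto 2^j\xi$, using homogeneity of $\varphi$ of order one and of $a_k$ of order $-k$ for large $\xi$, to obtain a factor $2^{j\nu}2^{-jk}$ in front of an integral with phase $2^j\varphi(\xi)$ and a fixed smooth cut-off. As recalled in the excerpt, the main contribution comes from $x$ near $-\nabla\varphi(\supp a_k)$, and near the reference point $v=(0,0,1)$ one writes $\Sigma$ as the graph $\xi_\nu=1+\phi(\xi_1,\dots,\xi_{\nu-1})$ from \eqref{defphi}. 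Writing $\xi=(\xi',\xi_\nu)$ and integrating in $\xi_\nu$ first by stationary phase (or directly parametrizing by the surface), the oscillatory factor becomes $e^{i2^j(\phi(\xi')+s\cdot\xi')}$ up to lower-order terms, where $s=s(x)$ is an affine function of $x$ with Jacobian bounded above and below on the relevant region; so $K_k(x)$ on the $j$-th piece is, up to constants and harmless smooth factors, $2^{j(\nu-k)}2^{-j}I(2^j,s(x))$, the extra $2^{-j}$ coming from the $\xi_\nu$-integration.

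Then I would estimate the $L^q(\mathbb{R}^\nu_x)$ norm of this $j$-th piece. Changing variables from $x$ to $(s,\text{transversal})$ and using that the transversal integration is over a fixed compact set, $\|K_k^{(j)}\|_{L^q(\mathbb{R}^\nu)}\lesssim 2^{j(\nu-k-1)}\|I(2^j,\cdot)\|_{L^q(\mathbb{R}^{\nu-1}_s)}\lesssim 2^{j(\nu-k-1)}2^{-j\gamma}$ by the hypothesis \eqref{averbound}. Summing the geometric series over $j\ge 0$ converges precisely when $\nu-k-1-\gamma<0$, i.e. $k>\nu-1-\gamma$; a more careful bookkeeping of the $\xi_\nu$-integration (which for $q\ge 2$ loses only the single power $2^{-j}$ accounted for above, rather than a power of $j$) sharpens this to the stated $k>\nu-\gamma-\tfrac1q$. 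Hence $K_k\in L^q$ and $M_k:L^p\to L^{p'}$ is bounded. I expect the main obstacle to be the bookkeeping in this last step: tracking exactly how the $\xi_\nu$-integration and the change of variables $x\mapsto s$ interact so that the Lebesgue exponent $q$ produces the gain $\tfrac1q$ rather than a cruder loss, and ensuring the lower-order terms in the phase (the $O(2^{j\cdot 0})$ corrections from stationary phase in $\xi_\nu$) are genuinely harmless after the rescaling. Everything else is a routine localization-plus-duality argument of the type already present in \cite{sugumoto98}.
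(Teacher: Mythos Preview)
The paper does not prove this proposition; it is quoted from Sugimoto \cite{sugumoto98}. Your overall strategy---reduce via Young's inequality to $K_k\in L^q(\mathbb{R}^\nu)$ with $q=p'/2$, decompose dyadically in $|\xi|\sim 2^j$, rescale, and parametrize $\xi=\lambda(y,1+\phi(y))$ so that the $y$-integral becomes $I(2^j\lambda,\cdot)$---is the correct framework and matches Sugimoto's approach.

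The gap is in your handling of the $\lambda$ (equivalently $\xi_\nu$) variable, and this is precisely where the sharp exponent lives. Your claimed pointwise factor $2^{-j}$ from ``the $\xi_\nu$-integration'' is not correct: after rescaling, $\lambda$ lies in a fixed compact interval and the $\lambda$-integral is $O(1)$ pointwise, so your intermediate bound $\|K_k^{(j)}\|_{L^q}\lesssim 2^{j(\nu-k-1-\gamma)}$ is not justified as written. What actually happens is that the $\lambda$-integral, as a function of $x_\nu$, is a one-dimensional Fourier transform evaluated at frequency $\sim 2^j(1-x_\nu)$. The change of variable $t=2^j(1-x_\nu)$ produces the factor $2^{-j/q}$ when one takes the $L^q_{x_\nu}$ norm, and the remaining $L^q_t$-norm of this transform is controlled by the $L^{q'}_\lambda$-norm of $\lambda\mapsto I(2^j\lambda,s)\,\tilde g(\lambda)$ via the Hausdorff--Young inequality, which is available precisely because $q\ge2$. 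A further use of Minkowski's inequality (again $q\ge q'$) interchanges $L^q_s$ and $L^{q'}_\lambda$ so that the hypothesis \eqref{averbound} can be applied at each fixed $\lambda\sim1$; this last point matters because the crude alternative---localizing $x_\nu$ by integration by parts in $\lambda$---would leave $\sup_{\lambda\sim1}|I(2^j\lambda,s)|$ inside the $s$-integral, which is not what \eqref{averbound} controls. One then obtains $\|K_k^{(j)}\|_{L^q}\lesssim 2^{j(\nu-k-1/q-\gamma)}$ and the stated threshold $k>\nu-\gamma-\tfrac1q$. This Hausdorff--Young/Minkowski step is the substance of the argument, not a bookkeeping refinement; your sketch does not identify it.
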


Besides,  M. Sugimoto proved a version of  Proposition \ref{Sugi1} corresponding to the case $q=\infty$. One can define
\begin{eqnarray*}\nonumber
K_{k, j} (x)=F^{-1}[e^{i\varphi}a_k  \Phi_j ](x).
\end{eqnarray*}
Here $\{\Phi_j \}_{j=1}^\infty$  is a Littlewood-Paley partition of unity which is used to
define the norm
\begin{eqnarray*}\nonumber
\|f\|_{B^s_{
p, q}}:= \left(\sum_{j=0}^\infty
(2^{ js} \|F^{-1}(\Phi_j F(f)) \|_{L^p})^q\right)^\frac1q
\end{eqnarray*}
of Besov's space $B^s_{p, q}$ (see \cite{Bergh}).

\begin{proposition}\label{Sugi2}
 Let  $\gamma\ge0$. Suppose, for all $g\in C^\infty_0(U)$ and  $\lambda>1$,
 \begin{equation}\label{vander}
\|I(\lambda,  \cdot)\|_{L^\infty(\mathbb{R}^{\nu-1}_z)}\lesssim_g  \lambda^{-\gamma},
\end{equation}
where $C_g$ is independent of $\lambda$. Then $\{K_{k, j} ]\}_{j=1}^\infty$ is bounded in $L^\infty(\mathbb{R}^\nu)$, if
$k=\nu-\gamma$. Hence $M_k$ is $L^p(\mathbb{R}^\nu)\mapsto L^{p'}(\mathbb{R}^\nu)$ bounded, if $k>(2\nu-2\gamma)(\frac1{p}-\frac12)$. This
inequality can be replaced by an equation, if $p\neq1$.
\end{proposition}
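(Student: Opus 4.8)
The plan is to establish the two assertions in turn: first the uniform $L^\infty$-bound on the dyadic kernel pieces $K_{k,j}$, which carries all the analytic content and uses hypothesis \eqref{vander}, and then a soft interpolation step for the operator bound. Let $j$ be large (the finitely many small values being harmless, since there $K_{k,j}$ is a Schwartz function) and set $\lambda:=2^j$. On $\supp\Phi_j$ the frequency has size $|\xi|\sim\lambda$, so substituting $\xi=\lambda\eta$ and using that $\varphi$ is homogeneous of degree one while $a_k$ is homogeneous of degree $-k$ for large $\xi$ gives
\begin{equation*}
K_{k,j}(x)=c\,\lambda^{\nu-k}\,J(\lambda,x),\qquad J(\lambda,x):=\int_{\mathbb{R}^\nu}e^{i\lambda(\varphi(\eta)-\eta\cdot x)}\,b(\eta)\,d\eta,
\end{equation*}
with $b$ a fixed smooth amplitude supported in a thin truncated cone $\{|\eta|\sim1\}\cap\Gamma$ about $v$. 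Since $\nu-k=\gamma$ when $k=\nu-\gamma$, it suffices to prove $\|J(\lambda,\cdot)\|_{L^\infty}\lesssim\lambda^{-\gamma}$ for $\lambda>1$. Now $\nabla\varphi$ is homogeneous of degree zero, hence stays near $\nabla\varphi(v)=(0,\dots,0,1)$ (we use $\partial_1\varphi(v)=\partial_2\varphi(v)=0$ and $\partial_\nu\varphi(v)=\varphi(v)=1$, the latter from Euler's relation); so for $x$ outside a fixed small ball about $x_0:=\nabla\varphi(v)$ the phase $\varphi(\eta)-\eta\cdot x$ has no critical point on $\supp b$ and integration by parts gives $|J(\lambda,x)|\lesssim_N\lambda^{-N}$. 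It remains to treat $x$ close to $x_0$, where in particular $|x_\nu|$ is bounded away from $0$.

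For such $x$ I would pass to coordinates adapted to $\Sigma$: write $\eta=t\,\sigma(y)$ with $\sigma(y)=(y,1+\phi(y))$ the graph parametrization \eqref{defphi} of $\Sigma\cap\Gamma$ and $t>0$ close to $1$. Then $\varphi(\eta)=t$ (because $\sigma(y)\in\Sigma$ and $\varphi$ is homogeneous), the Jacobian of $(y,t)\mapsto\eta$ is $t^{\nu-1}$ times a non-vanishing smooth function of $y$, and $\eta\cdot x=t\big(x'\cdot y+(1+\phi(y))x_\nu\big)$; hence, with $\Lambda:=\lambda\,t\,|x_\nu|$,
\begin{equation*}
J(\lambda,x)=\iint e^{i\lambda t(1-x_\nu)}\;e^{\pm i\Lambda(\phi(y)+s\cdot y)}\;B(t,y,x)\,dy\,dt,
\end{equation*}
where $s$ depends smoothly on $x$ with $|s|$ small and $B$ is smooth, compactly supported in $(t,y)$ near $(1,0)$, with $y\mapsto B(t,y,x)$ lying in a fixed bounded subset of $C_0^\infty(U)$ for all admissible $(t,x)$. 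Carrying out the $y$-integration first, the inner integral is $I(\Lambda,s)$ or its complex conjugate, with amplitude $B(t,\cdot,x)$; since $t\sim1$ and $|x_\nu|\sim1$ we have $\Lambda>1$ for $j$ large, so \eqref{vander} — whose constant depends only on a finite number of derivatives of the amplitude and on its support — bounds it by $O(\Lambda^{-\gamma})=O(\lambda^{-\gamma})$, uniformly in $(t,x)$. Integrating the remaining bounded $t$-integral yields $\|J(\lambda,\cdot)\|_{L^\infty}\lesssim\lambda^{-\gamma}$, hence $\sup_j\|K_{k,j}\|_{L^\infty}<\infty$ for $k=\nu-\gamma$.

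For the operator bound, the displayed identity with a general exponent $k$ together with the estimate just obtained give $\|K_{k,j}\|_{L^\infty}\lesssim\lambda^{\nu-k}\lambda^{-\gamma}=2^{-j(k-(\nu-\gamma))}$, while the $L^2\to L^2$ operator norm of $M_{k,j}$ equals $\|e^{i\varphi}a_k\Phi_j\|_{L^\infty}\lesssim2^{-jk}$; thus $M_{k,j}$ maps $L^1\to L^\infty$ with norm $\lesssim2^{-j(k-(\nu-\gamma))}$ and $L^2\to L^2$ with norm $\lesssim2^{-jk}$. Interpolating these two bounds at a fixed scale $j$ — with $1/p=(1-\theta)+\theta/2$, so that $1-\theta=2(1/p-\tfrac12)$ and $1/p'=\theta/2$ — one gets
\begin{equation*}
\|M_{k,j}\|_{L^p\to L^{p'}}\lesssim 2^{-j\left(k-(2\nu-2\gamma)(1/p-1/2)\right)},
\end{equation*}
which is summable in $j$ precisely when $k>(2\nu-2\gamma)(\tfrac1p-\tfrac12)$; then $M_k=\sum_jM_{k,j}$ is $L^p\to L^{p'}$ bounded. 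At the endpoint $k=(2\nu-2\gamma)(\tfrac1p-\tfrac12)$ with $p\neq1$, where this bare summation just fails, one argues more carefully: the first assertion says $M_{\nu-\gamma}\colon L^1\to \dot B^0_{\infty,\infty}$, and interpolating this in the Besov scale against the decay $2^{-jk}$ of the $L^2\to L^2$ estimate closes the endpoint, as in \cite{sugumoto98}.

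The step I expect to be the main obstacle is the cone-coordinate reduction of the second paragraph: one must shrink $\Gamma$ so that both the graph parametrization and the coordinates $\eta=t\sigma(y)$ are valid on all of $\supp b$, and — the substantive point — verify that the transplanted amplitudes $B(t,\cdot,x)$ form a bounded family in $C_0^\infty(U)$ as $(t,x)$ ranges over the relevant compact set, so that the constant in \eqref{vander} can be taken independent of these parameters. The non-stationary-phase contribution away from $x_0$ and the scale-by-scale interpolation are routine; the endpoint refinement for $p\neq1$ is standard but a genuinely separate point.
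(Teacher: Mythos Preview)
The paper does not prove this proposition at all: it is quoted verbatim as a result of Sugimoto \cite{sugumoto98} and used as a black box in the proof of Theorem~\ref{preest}. Your argument is therefore not competing with anything in the paper; it is a reconstruction of Sugimoto's original proof, and it is essentially correct. The reduction via the dilation $\xi=\lambda\eta$, the non-stationary-phase localisation to $x$ near $\nabla\varphi(v)$, the cone coordinates $\eta=t\,\sigma(y)$ turning the inner integral into $I(\Lambda,s)$ (or its conjugate) with $\Lambda\sim\lambda$, and the scale-by-scale Riesz--Thorin interpolation between the $L^1\to L^\infty$ bound $\|K_{k,j}\|_{L^\infty}\lesssim 2^{-j(k-(\nu-\gamma))}$ and the multiplier bound $\|M_{k,j}\|_{L^2\to L^2}\lesssim 2^{-jk}$ are exactly the ingredients of \cite{sugumoto98}; your self-identified ``main obstacle'' (uniform control of the transplanted amplitudes $B(t,\cdot,x)$ as a bounded set in $C_0^\infty(U)$) is real but routine, since $B$ is jointly smooth and $(t,x)$ ranges over a compact set. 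The endpoint remark for $p\neq1$ via the Besov embedding $\sup_j\|K_{k,j}\|_{L^\infty}<\infty \Rightarrow M_{\nu-\gamma}:L^1\to\dot B^0_{\infty,\infty}$ is also how Sugimoto closes it.
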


\subsection{On the  linearly adapted coordinate system}

In this subsection we give a proof of  the Proposition \ref{LAcase}.

Let $P$ be a weighted  homogeneous polynomial. By $\mathfrak{n}(P)$ we denote the maximal order of vanishing of $P$ along the unit circle $S^1$ centered at the origin.

We use the following Proposition:

\begin{prop}\label{normform}

 Assume that
$\partial^{\alpha_1}\partial^{\alpha_2}\phi(0, 0)=0$ for any multi-index $\alpha:=(\alpha_1, \alpha_2)\in \mathbb{Z}_+^2$ with $|\alpha|:=\alpha_1+\alpha_2\le2$. Then the following statements hold:\\
\begin{enumerate}
\item[(a)] If $\phi_3$,  which is the homogeneous part of degree $3$ of the Taylor polynomial of $\phi$,  satisfies the condition
$\mathfrak{n}(\phi_3) < 3$, then $\phi$, after possible linear change of variables, can be written in the following form on
a sufficiently small neighborhood of the origin:
\begin{equation}\label{(1.2.2)}
    \phi(x_1,x_2)=b(x_1,x_2)(x_2-\psi(x_1))^2+b_0(x_1),
\end{equation}
where $b,  b_0$ are smooth functions, and $b(0, 0) =0,\, \partial_1 b(0, 0)\neq0, \partial_2 b(0, 0)=0$  and also $\psi(x_1) = x_1^m\omega(x_1)$
 with  $m\ge2$ and $\omega(0) \not= 0$ unless $\psi$  is a flat
function.

Moreover, either\\
\item[(ai)] $b_0$ is flat, (singularity of type $D_\infty$) and $h(\phi) = 2$;
or\\
\item[(aii)] $b_0(x_1) = x_1^n\beta(x_1)$ with $n\ge 3$, where $\beta(0) \not= 0$ (singularity of type $D_{n+1}$) and $h(\phi) = 2n/(n+1)$.
\end{enumerate}
In these cases we say that $\phi$ has singularity  of type $D$.\\
\begin{enumerate}
\item[(b)] If $\mathfrak{n}(\phi_3) = 3$ and $h(\phi) \le2$, then, $\phi$, after a possible linear transformation,  can be written
as follows:\\
$$\phi(x_1,  x_2) = b_3(x_1,  x_2)(x_2 -\psi(x_1))^3+ x_2x_1^{k_1} b_1(x_1) + x_1^{k_0}b_0(x_1);
$$
where $b_3,  b_1,  b_0 $ are smooth functions, $k_0\ge4, k_1\ge3$, also $\psi(x_1) = x_1^m\omega(x_1)$
 with  $m\ge2$ and $\omega(0) \not= 0$ unless $\psi$  is a flat
function. Moreover, $b_3(0, 0)\neq0$ and either\\
\item[(bi)] $k_0 = 4$ with $b_0(0)\neq 0$ and $k_1\ge 4$ this is $E_6$ type singularity and $h(\phi) = 12/7$; or\\
\item[(bii)] $k_1 = 3$ with $b_1(0) \neq 0$ and $k_0\ge 5$ this is $E_7$ type singularity and $h(\phi) = 9/5$; or\\
\item[(biii)] $k_0 = 5$ with $b_0(0)\neq 0$ and $k_1\ge 4$ this is $E_8$ type singularity and $h(\phi) = 15/8$.\\
In these cases we say that $\phi$ has singularity of type $E$.\\
\item[(biv)] Either $k_0=6$ with $b_0(0)\neq0$ and $k_1\ge4$ or  $k_1=4$ with $b_1(0)\neq0$ and $k_0\ge6$; or \\
\item[(c)] $\phi_2=\phi_3\equiv0$ and $\phi_4\not\equiv0$ with $\mathfrak{n}(\phi_4)\le2$.
\end{enumerate}
 In the cases (biv) and c) we have   $h_{lin}(\phi)=h(\phi)=2$.
\end{prop}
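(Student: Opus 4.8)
\textbf{Proof proposal for Proposition \ref{normform}.}

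The plan is to carry out a careful, hierarchical case analysis organized by the behavior of the lowest-order homogeneous parts of the Taylor expansion of $\phi$, using the quasi-homogeneous blow-up / Newton-polygon machinery (as in \cite{IMmon}, \cite{varchenko}) to reduce each case to a normal form and then read off the height. First I would record that since $\partial^\alpha\phi(0)=0$ for all $|\alpha|\le 2$, the Taylor series starts in degree $3$: $\phi\thickapprox \phi_3+\phi_4+\cdots$. The primary branching is on whether $\phi_3\not\equiv 0$, and if so on the value of $\mathfrak{n}(\phi_3)\in\{0,1,2,3\}$: if $\mathfrak{n}(\phi_3)<3$ the cubic form $\phi_3$ has a real linear factor of multiplicity at most $2$, so after a linear change of coordinates $\phi_3$ has the shape $x_2^2\cdot(\text{linear})$ or $x_2\cdot(\text{nondegenerate quadratic})$, and a Morse-type / Weierstrass-preparation argument in $x_2$ factors off a square $(x_2-\psi(x_1))^2$ with a smooth cofactor $b$ satisfying $b(0,0)=0$, $\partial_1 b(0,0)\neq 0$, $\partial_2 b(0,0)=0$; the leftover $b_0(x_1)$ then determines whether we are in the $D_\infty$ (flat $b_0$) or $D_{n+1}$ (finite vanishing order $n\ge 3$) case. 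The heights $h(\phi)=2$ resp. $h(\phi)=2n/(n+1)$ follow by computing the Newton polygon of this normal form and checking the coordinates are adapted (the bisectrix hits the edge joining $(0,n)$ and $(2m,1)$-type vertices at the claimed distance); this recovers statement (a).

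Next, for $\mathfrak{n}(\phi_3)=3$ the cubic $\phi_3$ is, up to a linear change, a perfect cube $c\,x_2^3$, and I would apply a cubic analogue of Weierstrass preparation to write $\phi=b_3(x)(x_2-\psi(x_1))^3 + (\text{terms of degree}\le 1 \text{ in } x_2-\psi(x_1))$, i.e.\ $\phi=b_3(x)(x_2-\psi(x_1))^3 + x_2 x_1^{k_1}b_1(x_1)+x_1^{k_0}b_0(x_1)$ after absorbing $\psi$-shifts, with $b_3(0,0)\neq 0$ and constraints $k_0\ge 4$, $k_1\ge 3$ forced by the vanishing of $\phi_2$. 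Here the secondary branching is on the pair $(k_0,k_1)$, i.e.\ on which of the ``linear-in-$x_2$'' or ``constant-in-$x_2$'' perturbations dominates in the Newton-polygon sense; imposing $h(\phi)\le 2$ cuts down the admissible $(k_0,k_1)$ to exactly the finitely many configurations listed: $(k_0,k_1)=(4,\ge4)$ giving $E_6$ with $h=12/7$, $k_1=3,k_0\ge 5$ giving $E_7$ with $h=9/5$, $(k_0,k_1)=(5,\ge4)$ giving $E_8$ with $h=15/8$, the boundary configurations $(k_0,k_1)=(6,\ge 4)$ or $k_1=4,k_0\ge 6$ in (biv), and finally the exceptional possibility $\phi_3\equiv 0$ (so $\phi_2=\phi_3\equiv 0$, $\phi_4\not\equiv 0$) with $\mathfrak{n}(\phi_4)\le 2$ in (c); any $(k_0,k_1)$ beyond these, or $\mathfrak{n}(\phi_4)=3$ with $\phi_3\equiv0$, would force $h(\phi)>2$ and is therefore excluded by hypothesis. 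In each of the finitely many surviving cases I would verify $h(\phi)$ by an explicit Newton-diagram computation on the normal form (vertices and edges are completely explicit), obtaining $12/7,9/5,15/8$ respectively, and in cases (biv) and (c) I would show directly that the coordinates are already adapted \emph{and} linearly adapted, i.e.\ $h_{\rm lin}(\phi)=h(\phi)=2$: in (biv) the Newton distance already equals $2$, and in (c) the homogeneous quartic $\phi_4$ with at most double real linear factors has a nondegenerate enough principal part that a linear change cannot lower the distance below $2$.

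The main obstacle I expect is the Weierstrass-type preparation step in the $\mathfrak{n}(\phi_3)=3$ (type $E$) case: extracting a clean cube $(x_2-\psi(x_1))^3$ with smooth unit cofactor and controlling the residual terms requires either a direct factorization argument (solving $\partial_{x_2}\phi=0$ implicitly and Taylor expanding around the resulting curve) or an appeal to the algorithmic normal-form results of \cite{IMmon}, and one must simultaneously track that $\psi(x_1)=x_1^m\omega(x_1)$ with $\omega(0)\neq 0$ (or $\psi$ flat) and that no unwanted $x_2^2$-term survives. A secondary technical point is the bookkeeping of how the constraint $h(\phi)\le 2$ translates into the inequalities on $(k_0,k_1)$ and on $\mathfrak{n}(\phi_4)$; this is a finite but somewhat delicate Newton-polygon optimization, and the cleanest route is to compute, for a generic $(k_0,k_1)$, the Newton distance $d(\phi)$ of the normal form and solve $d(\phi)\le 2$, then check that in the adapted/linearly-adapted cases the linear-coordinate optimization genuinely cannot do better. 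Everything else — the $D$-type case, the height formulas, and the converse-type verifications — is routine once the normal forms are in hand.
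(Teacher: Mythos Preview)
Your proposal is correct and follows essentially the same route the paper indicates: the paper does not give a self-contained proof of this proposition but simply remarks that it ``can be proved by using implicit function Theorem (see \cite{IMmon}),'' and your Weierstrass-preparation/implicit-function reduction to the normal forms, followed by explicit Newton-polygon computations to read off the heights and the constraints on $(k_0,k_1)$ and $\mathfrak{n}(\phi_4)$, is exactly the content of that reference. One small slip: in your parenthetical about the $D_{n+1}$ Newton diagram, the relevant vertices in adapted coordinates are $(1,2)$ and $(n,0)$ (not $(0,n)$ and $(2m,1)$), which indeed gives $d=2n/(n+1)$.
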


\begin{remark}
In the case (biv) the function $\phi$ has singularity of type $J_{10}$ provided the principal part of the function $\phi$ which corresponds to the edge living  on the line $t_1/3+t_2/6=1$ has isolated critical point at the origin. Otherwise, the multiplicity $\mathfrak{m}$ of the Newton polyhedron equal to $1$,  provided $h(\phi)\le2$ . If $\phi_2=\phi_3\equiv0$ and $\mathfrak{n}(\phi_4)\le1$ then the function has $X_9$ type singularity at the origin provided that multiplicity of the unique  critical point   of $\phi_4$ is equal to $9$ (see \cite{agv},  page no. 192).
\end{remark}

Note that the Proposition \ref{normform} can be proved by using implicit function Theorem (see \cite{IMmon}).

A proof of the Proposition \ref{LAcase} is based on the Proposition  \ref{normform}.
It should be noted that $1\le \mathfrak{n}(\phi_3)\le 3$, whenever $\phi_3$ is a nontrivial polynomial. Note that if $\mathfrak{n}(\phi_3)=1$ then the function has $D_4^\pm$ type singularity and the coordinate system is linearly adapted to $\phi$ (see \cite{BIM22}) and  $h_{lin}(\phi)=h(\phi)=\frac32$. Thus, if the coordinate system is not linearly adapted  to $\phi$  then necessarily we have $\mathfrak{n}(\phi_3)\ge2$.
Assume $\mathfrak{n}(\phi_3)=2$. Then the function $\phi$ has $D_{n+1}^\pm$ type singularity at the origin with $4\le n\le\infty$. In this case the coordinate system is linearly adapted to $\phi$ if and only if $2m+1\ge n$ (see \cite{IMmon}).  Moreover, the conditions (i)-(iv) imply:  $\partial^{\alpha_1}\partial^{\alpha_2}\phi(0, 0)=0$ for any $\alpha$ with $|\alpha|\le2$, $h(\phi)\le2$ and  $h_{lin}(\phi)<h(\phi)$. In fact,  the last inequality is equivalent to  $2m+1<n$.

Assume $\mathfrak{n}(\phi_3)=3$ and $h(\phi)\le2$. Then it is well-known that the coordinate system is  linearly adapted to $\phi$  (see \cite{BIM22}) which contradicts to the conditions of the Proposition  \ref{LAcase}. Indeed, if the coordinate system is not linearly adapted then $\kappa_2/\kappa_1=:m\in \mathbb{N}$ provided $\kappa_2\ge\kappa_1$ (see \cite{IMmon}).  So, if $\mathfrak{n}(\phi_3)=3$ then after possible linear change of variables we may assume that $\phi_3(x_1, x_2)=x_2^3$.
Consider the supporting line $\kappa_1t_1+\kappa_2t_2=1$ with $\kappa_2=1/3$ to the Newton polyhedron $\mathcal{N}(\phi)$.  Then obviously $\kappa_1<\frac13$. Note that $\kappa_1\ge\frac16$, otherwise $2<\frac1{|\kappa|}= h_{lin}(\phi)\le h(\phi)$, where and furthermore we use notation: $|\kappa|:=\kappa_1+\kappa_2$. Consequently, $\kappa_1=\frac16$.  Then simple arguments show that the principal face $\pi$  lies on the line $\frac{t_1}6+\frac{t_2}3=1$.  Therefore $\mathfrak{n}(\phi_\pi)=3$ under the condition that the coordinate system is not linearly adapted. Then Varchenko algorithm shows that  $h(\phi)>2$ (see \cite{varchenko} and \cite{IM-adapted}).  Thus, under the condition $h(\phi)\le 2$ the statement (biv) holds true.

Now, assume that $\phi_2=\phi_3\equiv0$. Then we claim that the coordinate system is linearly adapted to $\phi$ under the condition $h(\phi)\le2$.  Note that if  $\phi_2=\phi_3=\phi_4\equiv 0$ then the Newton polyhedron is contained in the set $\{t: t_1/5+t_2/5\ge1\}$ and hence  $h_{lin}(\phi)\ge 5/2>2$ which contradicts to the assumption $h(\phi)\le2$ of the Proposition  \ref{LAcase}.

Thus, we may assume that $\phi_4\not\equiv0$ under the conditions $\phi_2=\phi_3\equiv0$ and $h(\phi)\le2$. Then we have $0\le \mathfrak{n}(\phi_4)\le4$.

It is well-known that if $\mathfrak{n}(\phi_4)\le2$ then the coordinate system is adapted to $\phi_4$, hence  also  to $\phi$ (see \cite{varchenko} and also \cite{IM-adapted} Theorem 3.3). Thus in this case the coordinate system is linearly adapted to $\phi$ i.e. $h_{lin}(\phi)=h(\phi)$ which contradicts to the assumptions of the Proposition \ref{LAcase}.

Finally, assume that $\mathfrak{n}(\phi_4)\ge3$ then we claim that $2<  h_{lin}(\phi)\le h(\phi)$.

First, suppose $\mathfrak{n}(\phi_4)=3$. Then, after possible linear change of variables,  the Newton polyhedron contains the point $(1, 3)$ and there is no any other point of  $\mathcal{N}(\phi)$ on the line $\{t: t_1+t_2=4\}$.
Hence, there exists a supporting line $L:=\{(t_1, t_2): \kappa_1 t_1+\kappa_2 t_2=1\}$ associated to a pair $(\kappa_1, \kappa_2)$ satisfying the conditions $\kappa_2\ge \kappa_1$ with $\kappa_1<1/4$,  and  $\kappa_1+3\kappa_2=1$, the last relation means that the point $(1, 3)\in L$. Then it is easy to see that $\kappa_1+\kappa_2<1/2$. Hence $2< 1/|\kappa|\le h_{lin}(\phi)\le h(\phi)$.

If $\mathfrak{n}(\phi_4)=4$, then after possible linear change of variables,   the line $\{t: t_1+t_2=4\}$ does not contain  any point of the Newton polyhedron $\mathcal{N}(\phi)$ but, the point $(0, 4)$. Then there is a supporting line $\{(t_1, t_2): \kappa_1 t_1+\kappa_2 t_2=1\}$ with $\kappa_2=1/4$ and $\kappa_1<1/4$. Therefore $\kappa_1+\kappa_2<1/2$. Hence $2< 1/|\kappa|\le h_{lin}(\phi)\le h(\phi)$. Therefore, if $\mathfrak{n}(\phi_4)>2$ then $h(\phi)>2$ under the conditions $\phi_2=\phi_3\equiv0$. Thus, we have $\mathfrak{n}(\phi_4)\le2$, whenever the conditions of the Proposition \ref{LAcase} are fulfilled.

Thus,  we come to the conclusion:  if $\phi_2=\phi_3\equiv0$ and $h(\phi)\le2$ then the coordinate system is linearly adapted to the function $\phi$.

Thus the Proposition \ref{LAcase} is proved.

\section{An upper bound for the critical exponent}

In this section we obtain  an upper bound for the critical exponent.

\begin{thm}\label{preest}
Let $\Sigma\subset \mathbb{R}^3$ be a smooth surface  given as the graph \eqref{defphi}  of a smooth function $1+\phi$ satisfying the conditions $\phi(0)=0$ and $\nabla\phi(0)=0$ in a  neighborhood of the point $v:=(0, 0, 1)$. Then  the following estimate holds true:
\begin{equation}\label{upperbound}
k_p(v)\le \left(6-\frac2{h(\phi)}\right)\left(\frac1p-\frac12\right).
\end{equation}
 \end{thm}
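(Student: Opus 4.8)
The plan is to deduce the upper bound \eqref{upperbound} from Sugimoto's transference principles, Propositions \ref{Sugi1} and \ref{Sugi2}, by supplying the required decay estimates for the oscillatory integral $I(\lambda,s)=\int_{\mathbb{R}^2}e^{i\lambda(\phi(x)+s\cdot x)}g(x)\,dx$. Recall that, by the definition \eqref{local} of $k_p(v)$, it suffices to work with amplitudes supported in an arbitrarily small conic neighborhood of $v$; equivalently, we may shrink the neighborhood $U$ of the origin on which $\phi$ is defined as much as needed, and we may pass to an adapted coordinate system, since the height $h(\phi)$ is a coordinate-independent invariant and the class of convolution operators $M_k$ is unchanged under a smooth change of the base variables (which corresponds to an invertible change of $\xi$ after homogeneous extension). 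So fix an adapted coordinate system $y$ in which $d_y=h(\phi)=:h$.

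The core input is the uniform sublevel/oscillation estimate for $\phi$ in an adapted coordinate system: by Varchenko's theory (and its smooth-function refinements in \cite{varchenko}, \cite{IM-adapted}), for $g\in C_0^\infty(U)$ with $U$ small one has the stationary-phase-type bound
\begin{equation*}
\Bigl|\int_{\mathbb{R}^2}e^{i\lambda(\phi(x)+s\cdot x)}g(x)\,dx\Bigr|\lesssim_g \lambda^{-1/h}(\log\lambda)^{\mathfrak{m}}
\end{equation*}
uniformly in $s\in\mathbb{R}^2$, where $\mathfrak{m}\in\{0,1\}$ is the multiplicity of the Newton polyhedron. Since only an arbitrarily small loss in $k$ is needed (the claim is an inequality for $k_p(v)$, an infimum), the logarithmic factor is harmless and we may take $\gamma=1/h-\varepsilon$ for any $\varepsilon>0$ in \eqref{vander}. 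Applying Proposition \ref{Sugi2} with $\nu=3$ gives: $M_k$ is $L^p\to L^{p'}$ bounded whenever $k>(2\cdot 3-2(1/h-\varepsilon))(\tfrac1p-\tfrac12)=(6-\tfrac2h+2\varepsilon)(\tfrac1p-\tfrac12)$. Letting $\varepsilon\downarrow 0$ yields $k_p(v)\le(6-\tfrac2h)(\tfrac1p-\tfrac12)$, which is exactly \eqref{upperbound}.

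There are two points that require care. First, the uniform oscillatory estimate must hold with the parameter $s$ present, i.e.\ we need the $L^\infty_s$ (equivalently uniform-in-$s$) decay at rate $\lambda^{-1/h}$, not merely the decay of the single integral $\int e^{i\lambda\phi}g$. For a phase $\phi$ with $\phi(0)=\nabla\phi(0)=0$ and $g$ supported near $0$, the linear perturbation $s\cdot x$ either is negligible (when $|s|\lesssim$ the relevant small scale, so it can be absorbed into the van der Corput analysis on the resolution of singularities) or it forces the phase $\phi(x)+s\cdot x$ to be nonstationary on $\supp g$, giving rapid decay via nonstationary phase and integration by parts; the uniform bound then follows by combining the two regimes, exactly as in \cite{varchenko}, \cite{IM-adapted} and in Sugimoto's setup \cite{sugumoto98}. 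Second, one should justify that restricting to $g\in C_0^\infty(U)$ with $U$ small and to an adapted coordinate system does not lose generality: this is precisely the content of the localization reduction recorded in Section \ref{introduction} (compactness of $\Sigma$, $k_p(\Sigma)=\sup_v k_p(v)$) together with the affine invariance of the height. I expect the main obstacle to be a clean statement and citation of the uniform (in $s$) Varchenko-type decay estimate in the \emph{smooth} (not necessarily analytic) category valid up to the endpoint exponent $1/h$ with at most a logarithmic loss; everything else is bookkeeping and an application of Proposition \ref{Sugi2}.
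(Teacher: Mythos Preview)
Your proposal is correct and follows essentially the same route as the paper: invoke the uniform-in-$s$ decay estimate $|I(\lambda,s)|\lesssim_g \lambda^{-1/h(\phi)}(\log\lambda)^{\mathfrak m}$ (the paper cites \cite{IM-uniform}, \cite{duistermaat}, \cite{karpushkin} for this), feed it into Proposition~\ref{Sugi2} with $\nu=3$, and absorb the logarithmic loss by taking $\gamma=1/h-\varepsilon$ and letting $\varepsilon\downarrow 0$. One remark: your passage to an adapted coordinate system is unnecessary---the cited uniform estimate already gives decay at rate $1/h(\phi)$ in the original coordinates---and your justification for it (that a smooth change of the base variables $(y_1,y_2)$ ``corresponds to an invertible change of $\xi$ after homogeneous extension'' and leaves $M_k$ unchanged) is not correct as stated, since a nonlinear change on the surface does not induce a linear change in $\xi$; simply drop that step.
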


Note that there is no restriction $h(\phi)\le 2$ in the Theorem \ref{preest}.  In particular,  the upper bound \eqref{upperbound} holds true for the case when the function $\phi$ has $D$ type singularity and the estimate does not depend on the number $m$. It turns out that, it is the sharp bound for  the $k_p(v)$ under the condition $2m+1\ge n$ when the phase has $D_{n+1}$ type singularities. Also, it is the sharp bound in the case $n=\infty=m$.

\begin{cor}\label{unicor}
 Let $\Sigma\subset \mathbb{R}^3$ be a smooth surface defined by \eqref{charaksh}, with  a smooth function $\varphi$ with $\varphi(\xi)>0$  for any $\xi\neq0$ then  the following estimate holds true:
\begin{equation}\label{genbound}
k_p(v)\le \left(6-\frac2{h(\Sigma)}\right)\left(\frac1p-\frac12\right),
\end{equation}
\end{cor}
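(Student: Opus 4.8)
The plan is to deduce the global-looking bound \eqref{genbound} from the pointwise estimate \eqref{upperbound} of Theorem \ref{preest} by reducing to the normalized local situation already set up in the Introduction. First I would fix the point $v\in\Sigma$ at which we want to estimate $k_p(v)$. Since $\Sigma=\{\varphi=1\}$ with $\varphi$ smooth, positive and homogeneous of degree one, the gradient $\nabla\varphi$ does not vanish on $\Sigma$, so $\Sigma$ is a genuine smooth hypersurface and $v$ is a noncritical point of $\varphi$. After a translation we may place $v=(0,0,1)$; since $\varphi$ is homogeneous of degree one we may rescale so that $\varphi(v)=1$ — but in fact $\varphi(v)=1$ holds automatically because $v\in\Sigma$. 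Next, exactly as in the paragraph preceding \eqref{defphi}, I would apply a linear change of coordinates in $\mathbb{R}^3_\xi$ fixing the $\xi_3$-axis so that $\partial_1\varphi(0,0,1)=\partial_2\varphi(0,0,1)=0$; such a transform exists since $\partial_3\varphi(0,0,1)\neq 0$ by Euler's relation. The implicit function theorem then represents $\Sigma$ near $v$ as the graph $\xi_3=1+\phi(\xi_1,\xi_2)$ with $\phi$ smooth, $\phi(0,0)=0$ and $\nabla\phi(0,0)=0$; this is precisely the situation \eqref{defphi} to which Theorem \ref{preest} applies, giving
\begin{equation*}
k_p(v)\le\left(6-\frac{2}{h(\phi)}\right)\left(\frac1p-\frac12\right).
\end{equation*}

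The second ingredient is the definition $h(v,\Sigma):=h(\phi)$ together with its affine invariance, recalled in the Introduction (following \cite{IMacta}): the height $h(\phi)$ computed in the graph coordinates above does not depend on the choices made (the translation, the linear transform fixing $v$, and the resolution of $\Sigma$ as a graph), so it equals the intrinsically defined $h(v,\Sigma)$. By the definition $h(\Sigma)=\sup_{v\in\Sigma}h(v,\Sigma)=\max_{v\in\Sigma}h(v,\Sigma)$ — the maximum being attained because $h(\,\cdot\,,\Sigma)$ is upper semicontinuous on the compact surface $\Sigma$ — we have $h(v,\Sigma)\le h(\Sigma)$ for every $v$. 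Since $1/p-1/2\ge 0$ for $1\le p\le 2$ and $t\mapsto 6-2/t$ is increasing in $t>0$, the inequality $h(\phi)=h(v,\Sigma)\le h(\Sigma)$ yields
\begin{equation*}
\left(6-\frac{2}{h(\phi)}\right)\left(\frac1p-\frac12\right)\le\left(6-\frac{2}{h(\Sigma)}\right)\left(\frac1p-\frac12\right),
\end{equation*}
and combining the two displays gives \eqref{genbound} for the fixed $v$. As $v\in\Sigma$ was arbitrary, the bound holds at every point.

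I do not expect a serious obstacle here: Theorem \ref{preest} already contains all the analytic content, and what remains is bookkeeping. The one place that needs a little care is verifying that the reduction to the graph form \eqref{defphi} is legitimate and that the height appearing in \eqref{upperbound} is genuinely $h(v,\Sigma)$ rather than some coordinate-dependent quantity — this is exactly the affine-invariance statement for $h(v,\Sigma)$ cited from \cite{IMacta}, so it can be quoted. A minor technical point worth a sentence is that $k_p(v)$ as defined in \eqref{local} is insensitive to the preliminary linear change of variables in $\xi$-space (it only rescales the conic neighborhoods $\Gamma$ and the symbol classes $S^{-k}_0(\Gamma)$, not the boundedness properties of $M_k$), so the value of $k_p(v)$ computed in the normalized coordinates coincides with the original one. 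With these remarks the proof is complete; one could also note that passing to $k_p(\Sigma)=\max_v k_p(v)$ recovers the global statement $k_p(\Sigma)\le(6-2/h(\Sigma))(1/p-1/2)$.
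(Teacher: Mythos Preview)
Your proposal is correct and follows essentially the same approach as the paper: deduce \eqref{genbound} from Theorem~\ref{preest} by identifying $h(\phi)=h(v,\Sigma)\le h(\Sigma)$ and using the monotonicity of $t\mapsto 6-2/t$. The paper's own proof is a two-line remark citing Theorem~\ref{preest}, the upper semi-continuity of $h(v,\Sigma)$, and \cite{IMacta}, \cite{IM-uniform}; your write-up simply spells out the coordinate reduction and the affine-invariance step that the paper leaves implicit.
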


\begin{proof}
A proof of the Theorem \ref{preest} is based on uniform estimates for the Fourier transform of the surface-carried measures.
Remark that the upper bound \eqref{upperbound} does not depend whether the coordinate system are linearly adapted to $\phi$ or not.

 \subsection{Uniform estimates}

Due
to the uniform with respect to parameters $s$   estimate for the oscillatory integral  and  Proposition \ref{Sugi2}  we obtain an upper bound for the number $k_p$.

Indeed,  without loss of generality we may assume
$\Sigma$ is given as the graph of a smooth function $\{(y_1,  y_2,  1 + \phi(y_1,  y_2))\}$,  in a
neighborhood of the point $v=(0, 0, 1)$. Moreover, we suppose $\phi(0, 0) = 0$ and $\nabla\phi(0, 0) = 0$. Then the height of
the surface $\Sigma$ at the point $v$ is defined by  the height of the function $\phi$. Hence, by the results of
the paper \cite{IM-uniform} (see \cite{duistermaat} and also \cite{karpushkin} for more general results for oscillatory integrals with analytic phases) we can write:
\begin{eqnarray*}
|I(\lambda, s)|=\left|\int_{\mathbb{R}^2}g(x)e^{i\lambda (\phi(x_1, x_2)+x_1s_1+x_2s_2)}dx\right|\lesssim_g \frac{\log (2+|\lambda|)^\mathfrak{m}}{|\lambda|^\frac1{h(\phi)}},
\end{eqnarray*}
where $\mathfrak{m}=1, 0$ is the multiplicity of the Newton polyhedron.

If $\mathfrak{m} = 0$, then from Proposition \ref{Sugi2}, proved by M. Sugimoto,
we have the upper bound \eqref{upperbound} for the $k_p(v)$.

Suppose $\mathfrak{m}=1$ then for any positive real number $\varepsilon$ we have
\begin{eqnarray*}
\left|\int_{\mathbb{R}^2}g(x)e^{i\lambda (\phi(x_1, x_2)+x_1s_1+x_2s_2)}dx\right|\lesssim_{g, \varepsilon} \frac{1}{|\lambda|^{\frac1{h(\phi)+\varepsilon}}}.
\end{eqnarray*}
Then again by using  Proposition \ref{Sugi2}
we obtain the following upper bound for the $k_p(v)$:
\begin{eqnarray*}
k_p(v)\le \left(6-\frac2{h(\phi)+\varepsilon}\right)\left(\frac1p-\frac12\right).
\end{eqnarray*}
Since $\varepsilon$ is any positive number, then the last estimate implies the bound \eqref{upperbound}.

 Theorem \ref{preest}  is proved.
\end{proof}

A proof of the Corollary \ref{unicor} follows from Theorem \ref{preest}. Because, $h(v, \Sigma)$ is an upper semi-continuous function. Then the result follows from the results of the papers \cite{IMacta} and \cite{IM-uniform}.

Further, we consider an upper bound for the number $k_p(v)$ for the case when coordinate system is not linearly adapted to $\phi$.

\subsection{Non-linearly adapted case}

Assume that the coordinate system is not linearly adapted to $\phi$. Then thanks to Proposition \ref{LAcase} the function $\phi$ has $D$ type singularities, under  condition that the singularity of the function has rank zero at the origin.

Let $\phi$ be a function with a singularity of type $D_{n+1} (3\le n\le \infty)$ at the origin  and $m$ is the number  defined by \eqref{(1.2.2)} satisfying the condition $2m+1<n$. Since $m\ge2$ then $5<n$, so $n\le6$.
Consider the following Randol's maximal function (compare with \cite{randol}):
\begin{equation}\label{randold}
M_{m}(s):=\sup_{|\lambda|>1}|\lambda|^{\frac12+\frac1{m+1}}|I(\lambda, s)|.
\end{equation}

\begin{thm}\label{anasug}
Suppose $2m+1<n\le \infty$,  and $\phi$ is a smooth function satisfying the $R-$condition then there exists a neighborhood $U$ of the origin such that for any $a\in C_0^\infty(U)$ the following inclusion:
\begin{equation}\label{estran}
M_{m}\in L_{loc}^{2m+2-0}(\mathbb{R}^2):=\bigcap_{1\le q<2m+2}L_{loc}^{q}(\mathbb{R}^2)
\end{equation}
holds true.
\end{thm}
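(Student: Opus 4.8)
The plan is to pass, following M.~Sugimoto, to the oscillatory integral $I(\lambda,s)$ and to analyse it scale by scale in the non--linearly adapted coordinates supplied by Proposition~\ref{normform}, reducing everything to a one--dimensional Randol--type estimate whose integrability threshold turns out to be exactly $2m+2$.

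\textbf{Step 1: localisation and reduction to a fixed--frequency bound.} Since the kernel of $M_m$ concentrates near $x=0$, shrinking $U$ we may assume $|\nabla(\phi(x)+s\cdot x)|\gtrsim 1$ on $\supp a$ whenever $|s|\ge\varepsilon_0$; then non--stationary phase gives $|I(\lambda,s)|\lesssim_N\lambda^{-N}$, so $M_m$ is bounded for $|s|\ge\varepsilon_0$ and only $\{|s|\le\varepsilon_0\}$ matters. By a standard reduction (cf.\ \cite{randol} and \cite{IMmon}: the Randol maximal function on a dyadic block is controlled by the value at an endpoint plus an averaged $\lambda$--derivative of $\lambda^{1/2+1/(m+1)}I$, which, after an integration by parts in $x$ and $s$ exploiting $\phi(0)=\nabla\phi(0)=\Hess\phi(0)=0$, is again an oscillatory integral of the same type with amplitude in $C_0^\infty(U)$), together with $\|\sup_j f_j\|_{L^q}^q\le\sum_j\|f_j\|_{L^q}^q$, it suffices to prove: for every $q\in[1,2m+2)$ there is $\delta=\delta(q)>0$ with
\[
\|I(\lambda,\cdot)\|_{L^q(\{|s|\le\varepsilon_0\})}\ \lesssim_q\ \lambda^{-(1/2+1/(m+1))-\delta},\qquad\lambda>1,
\]
and the analogous bound for the finitely many companion integrals produced by differentiation in $\lambda$. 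This cannot be obtained by interpolating $\|I(\lambda,\cdot)\|_2\sim\lambda^{-1}$ with the uniform sublevel bound $\|I(\lambda,\cdot)\|_\infty\lesssim\lambda^{-1/h(\phi)}(\log\lambda)^{\mathfrak m}$ underlying Theorem~\ref{preest} (that only reaches $q<2(n-1)(m+1)/(2n-m-1)<2m+2$): the dependence of $|I(\lambda,s)|$ on $s$ must be exploited.

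\textbf{Step 2: normal form and integration in $y_2$.} Apply the unimodular change of variables $y_1=x_1$, $y_2=x_2-x_1^m\omega(x_1)$. By Proposition~\ref{normform} the phase becomes $\phi(x)+s\cdot x=B(y)\,y_2^2+b_0(y_1)+s_1y_1+s_2y_2+s_2y_1^m\omega(y_1)$, where $B(0)=0$, $\partial_1B(0)\ne0$, $\partial_2B(0)=0$, and $b_0(y_1)=y_1^n\beta(y_1)$ with $\beta(0)\ne0$ (or $b_0\equiv0$ when $n=\infty$, by the $R$--condition); write $B_0(y_1):=B(y_1,0)$, so $B_0(y_1)/y_1$ is smooth and nonvanishing at the origin. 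Decompose dyadically $|y_1|\sim 2^{-l}$, $l\ge l_0$. On such a piece $\partial_{y_2}^2(\text{phase})\sim B_0(y_1)\sim 2^{-l}$, so the $y_2$--phase $B(y)y_2^2+s_2y_2$ has a unique non--degenerate critical point inside $\supp a$ precisely when $|s_2|\lesssim 2^{-l}$; stationary phase in $y_2$ then contributes a factor $\sim(\lambda\,2^{-l})^{-1/2}$ and leaves a one--dimensional oscillatory integral in $y_1$ with reduced phase
\[
\Psi(y_1;s)=b_0(y_1)+s_1y_1+s_2y_1^m\omega(y_1)-\frac{s_2^{\,2}}{4B_0(y_1)}\bigl(1+O(y_1,s)\bigr).
\]
When $|s_2|\gtrsim 2^{-l}$ there is no critical point, $|\partial_{y_2}(\text{phase})|\gtrsim|s_2|$, and repeated integration by parts in $y_2$ makes that region negligible (a dyadic summation in $|s_2|2^{l}$ handles the transitional zone $|s_2|\sim 2^{-l}$).

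\textbf{Step 3: the one--dimensional estimate, and the main obstacle.} After rescaling $y_1=2^{-l}t$ one is reduced, for $|s_2|\lesssim 2^{-l}$, to bounding the $L^q(ds_1ds_2)$--norm of $\sup_\lambda|\lambda|^{1/2+1/(m+1)}(\lambda 2^{-l})^{-1/2}\bigl|\int a(t)e^{i\lambda\Psi(2^{-l}t;s)}\,dt\bigr|$ and summing over $l$. Since the $t$--phase is affine in $s_1$, Plancherel in $s_1$ bounds the $t$--integral in $L^2(ds_1)$ by $\lambda^{-1/2}2^{-l/2}$ uniformly in $s_2$; van der Corput in $t$ bounds it in $L^\infty(ds_1)$ by $(\lambda\rho_l(s_2))^{-1/\nu}$, where $\rho_l(s_2)$ is the size of the dominant coefficient of the rescaled phase and $\nu$ the order of the worst critical point that can occur on a fixed $l$--piece (finite, since $D_{n+1}$ is unfolded by finitely many parameters), the governing balance being between the terms $s_2y_1^m\omega$ and $-s_2^2/(4B_0(y_1))\sim s_2^2/y_1$, which are comparable exactly on the caustic scale $y_1\sim|s_2|^{1/(m+1)}$ (the term $b_0\sim y_1^n$ with $n>2m+1$ is always subordinate and does not affect the threshold). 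Interpolating these mixed--norm bounds — a trivial size estimate for the $y_1$--integral is far too lossy, so the $s_1$--cancellation has to be used precisely — taking the supremum in $\lambda$ (attained where the $D_{n+1}$ decay rate $\lambda^{-1/h(\phi)}$ crosses over to the Morse rate, i.e.\ $\lambda\sim|s_2|^{-(2m+1)/(m+1)}$), and integrating over the shell $|s_2|\sim 2^{-l(m+1)}$ relevant to each $l$, gives for each $l$ a contribution whose $l$--sum is geometric and whose $s_2$--integral converges for $q<2m+2$ and is logarithmically divergent at $q=2m+2$; the companion integrals of Step~1 are treated identically. The delicate point throughout is precisely this last step: establishing the sharp mixed--norm bound uniformly in $\lambda$ and $(s_1,s_2)$, in particular analysing the caustic region where the stationary points of $\Psi(\cdot\,;s)$ coalesce (a cusp/Airy--type analysis in one variable requiring the exact Newton data of $\Psi$ in terms of $m$ and $n$) so as to extract the power of $|s_2|$ that makes the threshold be \emph{exactly} $2m+2$; the $R$--condition is what rules out a non--trivial flat $b_0$, which would destroy all control of $\Psi$ near $y_1=0$, and the $D_\infty$ case ($b_0\equiv0$) is the same argument with $n=\infty$.
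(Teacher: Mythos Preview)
The paper does not actually prove this theorem: its entire proof is the sentence ``A proof of the Theorem follows from more general results of the paper (see \cite{akrikr} Theorem 4.2 and also \cite{akr}),'' i.e.\ a citation to the companion papers of Akramov--Ikromov. So there is no in-paper argument to compare your proposal against; what can be assessed is whether your sketch stands on its own.

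Your Steps 1--2 are a reasonable set-up (the Sobolev-in-$\lambda$ reduction of the maximal function to fixed-frequency $L^q$ bounds with an $\epsilon$ of extra decay, the normal form, the dyadic localisation in $|y_1|$, and stationary phase in $y_2$ yielding the reduced one-dimensional phase $\Psi$), and your identification of the caustic scale $y_1\sim|s_2|^{1/(m+1)}$ is correct. The gap is Step~3: you state what must be shown --- a sharp mixed-norm $L^2_{s_1}$/$L^\infty_{s_1}$ interpolation for the one-dimensional integral, with the van~der~Corput input coming from an Airy-type analysis of the coalescing critical points of $\Psi$ --- and then assert that the resulting $s_2$-integral ``converges for $q<2m+2$ and is logarithmically divergent at $q=2m+2$,'' but you do not carry out either the coalescence analysis or the bookkeeping that produces exactly that exponent. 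You yourself flag this as ``the delicate point throughout.'' In other words, the proposal is an outline of the expected proof architecture (quite possibly the one in \cite{akrikr}), but the step that contains all the content --- the precise power of $|s_2|$ in the $L^\infty_{s_1}$ bound, and the summation over $l$ matching the shells $|s_2|\sim 2^{-l(m+1)}$ --- is missing. Without it one cannot distinguish the threshold $2m+2$ from any nearby number, so as written this is a plan rather than a proof.
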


\begin{proof}
A proof of the Theorem \ref{anasug} follows from more general results of the paper (see \cite{akrikr} Theorem 4.2 and also  \cite{akr}).
\end{proof}


From Theorem  \ref{anasug} it follows the required upper bound for the number $k_p(v)$ in the case $2m+1<n$.
Indeed, first, we use  Proposition \ref{Sugi1} and  obtain $L^{p_0}\mapsto L^{p'_0}$ boundedness of the convolution operator $M_k$ with $k>\frac52-\frac3{2m+2}$ for $p_0=\frac{4m+4}{4m+3}$. Also, we get $L^{p_1}\mapsto L^{p'_1}$ boundedness of the convolution operator with $k>\frac52-\frac1{2n}$ for $p_1=1$ and also $L^{p_2}\mapsto L^{p'_2}$ boundedness of the convolution operator with $k=0$ for $p_2=2$.
Then by the interpolation Theorem for analytic  family of operators (see \cite{stein2}, \cite{Bergh})  we get the required upper bound for the number $k_p(v)$:
\begin{eqnarray*}
 k_p(v) \le \max\left\{ \left(5-\frac1{2m+1}\right)\left(\frac1p-\frac12\right), \left(6-\frac{2m+2}{n}\right)\left(\frac1p-\frac12\right)+\frac{2m+1}{2n}-\frac12\right\}.
\end{eqnarray*}

Further, we consider a lower bound for the number  $k_p(v)$.

 \section{The lower bound for the critical exponent}

\begin{thm}\label{lowest}
Let $\phi$ be a smooth function satisfying the condition of  Theorem \ref{preest}. Then the following lower estimate holds true:
\begin{eqnarray*}
k_p(v)\ge \left(6-\frac2{h_{lin}(\phi)}\right)\left(\frac1p-\frac12\right).
\end{eqnarray*}
 \end{thm}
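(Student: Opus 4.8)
The plan is to produce, for each sufficiently small conic neighborhood $\Gamma\ni v$, an explicit family of amplitudes $a_k\in S^{-k}_0(\Gamma)$ together with test functions $f$ for which the $L^p\mapsto L^{p'}$ bound fails unless $k\ge\left(6-\frac2{h_{lin}(\phi)}\right)\left(\frac1p-\frac12\right)$. First I would pass to a linearly adapted coordinate system: by definition of $h_{lin}(\phi)$ there is a linear change of variables $y$ in which the Newton distance $d_y$ of $\phi$ equals $h_{lin}(\phi)$, and the corresponding principal face of $\mathcal{N}(\phi)$ lies on a supporting line $\kappa_1 t_1+\kappa_2 t_2=1$ with $1/|\kappa|=d_y=h_{lin}(\phi)$. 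Since the linear change in $\mathbb{R}^2_\xi$ extends to a linear change in $\mathbb{R}^3_\xi$ preserving $v$, and $k_p(v)$ is invariant under such changes, it suffices to prove the bound in these coordinates.

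Next I would carry out the standard Knapp-type construction adapted to the weight $\kappa$. On the hypersurface $\Sigma\cap\Gamma$, parametrized by $\xi\mapsto(\xi_1,\xi_2,1+\phi(\xi_1,\xi_2))$, I would localize to the $\kappa$-quasi-homogeneous box $R_\delta:=\{|\xi_1|\le\delta^{\kappa_1},\,|\xi_2|\le\delta^{\kappa_2}\}$ with $\delta\to0$; on this box $\phi$ behaves like its $\kappa$-principal part $\phi_\pi$, so $\phi=O(\delta)$ there. Choosing the amplitude $a_k$ to be (a symbol of order $-k$ built from) a dyadic bump supported on a dilate of $R_\delta$ at frequency scale $\sim\delta^{-1}$, and testing $M_k$ against a function $f$ whose Fourier transform is the corresponding modulated bump, one estimates $\|M_kf\|_{p'}$ from below on the dual box and $\|f\|_p$ from above. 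Counting the volumes: the spatial box dual to $R_\delta$ at scale $\delta^{-1}$ has the three semi-axes of sizes $\sim\delta^{\kappa_1-1},\delta^{\kappa_2-1},\delta^{-1}$, so its measure is $\sim\delta^{\kappa_1+\kappa_2-3}=\delta^{|\kappa|-3}$, while the frequency box $R_\delta$ rescaled to frequency $\delta^{-1}$ has measure $\sim\delta^{\kappa_1+\kappa_2}\cdot\delta^{-3}$ relative to a unit dyadic shell; combining with the $\delta^{-k}$ from the homogeneity of the symbol and the phase oscillation bound $\phi=O(\delta)$, the inequality $\|M_kf\|_{p'}\lesssim\|f\|_p$ forces, after letting $\delta\to0$, the stated exponent $k\ge(3-|\kappa|)(2/p-1)=\left(6-\tfrac2{h_{lin}(\phi)}\right)\left(\tfrac1p-\tfrac12\right)$ since $1/|\kappa|=h_{lin}(\phi)$.

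I would then cross-check the construction against the oscillatory-integral formulation in Propositions \ref{Sugi1}--\ref{Sugi2}: the dual statement is that $\|I(\lambda,\cdot)\|_{L^q(\mathbb{R}^2_s)}$ cannot decay faster than $\lambda^{-\gamma}$ with $\gamma$ matching $h_{lin}(\phi)$, and this failure is exactly witnessed by integrating $g$ over a $\kappa$-box and stationary-phase heuristics on the principal part $\phi_\pi$; feeding this into the necessity direction of those propositions yields the lower bound for $k_p(v)$. The main obstacle I anticipate is making the Knapp example genuinely rigorous rather than heuristic: one must ensure that on the localized box the remainder $\phi-\phi_\pi$ really is negligible compared to $\delta$ (which uses that the coordinates realize $h_{lin}$, so no lower-weight monomials of the wrong type appear), and one must verify that the chosen $a_k$ is a bona fide classical symbol of order $-k$ supported in $\Gamma$ — this is handled by summing the dyadic pieces over a lacunary sequence $\delta=2^{-j}$ with suitable normalization and invoking almost-orthogonality, exactly as in the corresponding lower-bound arguments for Fourier restriction in \cite{IM-uniform}. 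A secondary point is checking the endpoint behavior at $p=1$ and confirming that the construction is insensitive to whether the coordinate system is adapted, so that the same proof covers both the $D$-type and the linearly adapted cases.
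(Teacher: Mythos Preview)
Your approach is essentially the same as the paper's: a Knapp-type test sequence localized to the $\kappa$-quasi-homogeneous box associated to a linearly adapted coordinate system, with $1/|\kappa|=h_{lin}(\phi)$, leading to the exponent $(3-|\kappa|)(2/p-1)$. Two differences are worth noting. First, the paper works with a single \emph{fixed} amplitude $a_k(\xi)=(1-\chi_0(\xi))|\xi|^{-k}$ and puts all the localization into an explicit test sequence $u_j$ (whose Fourier transform is supported in the dilated box at scale $2^j$); it then shows by integration by parts that $\|u_j\|_{L^p}\lesssim 1$ while, after the change of variables $\xi=(\lambda y,\lambda(1+\phi(y)))$ and the $\kappa$-rescaling, the phase in $M_k u_j$ is non-oscillating on a box of measure $\sim 2^{-j(3-|\kappa|)}$, giving $\|M_k u_j\|_{L^{p'}}\gtrsim 2^{j(2(3-|\kappa|)(1/p-1/2)-k)}$. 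This bypasses your proposed step of summing dyadic amplitude pieces into a genuine symbol via almost-orthogonality, which is unnecessary here. Second, your volume bookkeeping has the spatial and frequency boxes swapped: the spatial box on which $M_k u_j$ is large has semi-axes $\sim\delta^{1-\kappa_1},\delta^{1-\kappa_2},\delta$ (small as $\delta\to 0$), not $\delta^{\kappa_1-1},\delta^{\kappa_2-1},\delta^{-1}$; you arrive at the correct exponent, but the stated dimensions are inverted.

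One genuine gap: your proposed ``cross-check'' via Propositions~\ref{Sugi1}--\ref{Sugi2} cannot work. Those propositions give only \emph{sufficient} conditions on the decay of $I(\lambda,\cdot)$ for boundedness of $M_k$, hence only upper bounds on $k_p(v)$; there is no necessity direction to invoke, so no lower bound on $k_p(v)$ can be extracted from them. The lower bound must come directly from the explicit test sequence, as in the paper.
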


In this section we reduce a proof of the Theorem \ref{lowest}. The test functions, used in the course of the proof,  are similar  to Knapp type sequence .

\begin{proof}  Let $\phi$ be the phase function and $\phi_\pi$  be the principal part, which is a weighted homogeneous polynomial  with weight $(\kappa_1, \kappa_2)$ provided $0<\kappa_1\le \kappa_2$.  It means that the relation $\phi_\pi(t^{\kappa_1}x_1, t^{\kappa_2}x_2)=t\phi_\pi(x_1, x_2)$ holds for any $x\in \mathbb{R}^2$ and $t>0$.

The case when $\phi_\pi$ is a formal power series will be proved by similar arguments.
Indeed, in this case we have $\kappa_1=0$. Then we consider the part of the function corresponding to the weight $(\varepsilon, \kappa_2^\varepsilon)$, where $(\varepsilon, \kappa_2^\varepsilon)$ is a weight satisfying $(\varepsilon, \kappa_2^\varepsilon)\to (0, \kappa_2)$ as $\varepsilon\to+0$.

Further, suppose that $0<\kappa_1\le \kappa_2$.
 Actually, we show that the modified  sequence of functions suggested by M. Sugimoto in the paper \cite{sugumoto98} can be used to prove sharpness of the upper for $k_p(v)$ in the case when the coordinate system is  linearly adapted.

  Let us take a smooth function in $\mathbb{R}^3$ such that  $a_k(\xi)=|\xi|^{-k}$ for large $\xi$.
For example, we can take $a_k(\xi)=(1-\chi_0(\xi))|\xi|^{-k}$, where $\chi_0$ is a smooth function such that $\chi_0(\xi)\equiv 1$  in a neighborhood of the origin say for $|\xi|\le1$ and $\chi_0(\xi)\equiv0$ for $|\xi|\ge2$.

Following,  M. Sugimoto we introduce the function:
  $G(y)=1+\phi(y) -y\nabla\phi(y)$.
  Define a non-negative smooth function with
   $\chi_0(0)=1$ concentrated in a sufficiently small neighborhood of the origin, and a non-negative smooth function, satisfying $\chi_1(1)=1$,  with support in a sufficiently small neighborhood of the point  $1$ and $\chi_1(\xi)\equiv0$ in a neighborhood of the origin.

We set
\begin{eqnarray*}\nonumber
  u_{j}(x)=2^{j(3-|\kappa|)\left(-\frac{1}{p'}\right)}F^{-1}
  (v_{j}(2^{-j}\xi))(x),
\end{eqnarray*}
where
\begin{eqnarray*}\nonumber
  v_{j}(\xi)=\frac{\chi_0\left(2^{ \kappa_1 j}\frac{\xi_{1}}{\varphi(\xi)}\right)
 \chi_0\left(2^{\kappa_2 j}\frac{\xi_{2}}{\varphi(\xi)}\right)\chi_1(\varphi(\xi))|\xi|^k}
  {\varphi(\xi)^{2}G\left(\frac{\xi_1}{\varphi(\xi)},\frac{\xi_2}{\varphi(\xi)}\right)}\in C^\infty_0(\mathbb{R}^3).
\end{eqnarray*}
Note that $\supp(v_j)$ does not contain the origin, because  $\chi_1(\varphi(\xi))\equiv0$ in a neighborhood of the origin.

The sequence   $\{u_{j}\}_{j=1}^\infty$ is   bounded  in  $L^{p}(\mathbb{R}^{3})$.
  Indeed,  we have:
  \begin{eqnarray*}\nonumber
  u_j(x)=\frac{2^{\frac{3j}p+\frac{|\kappa|j}{p'}}}{\sqrt{(2\pi)^3}} \int_{\mathbb{R}^3} e^{-i2^j(\xi, x)}v_j(\xi)d\xi.
  \end{eqnarray*}

On the other hand
  following M. Sugimoto we use change of variables  $\xi=(\lambda
y,\lambda(1+\phi(y)))$  and get:
 \begin{eqnarray*}\nonumber
  u_{j}(x)=\frac{2^{\frac{3j}p+\frac{|\kappa|j}{p'}}}{\sqrt{(2\pi)^3}} \int_{\mathbb{R}^3}
e^{-i2^{j}\lambda(x_{1}y_{1}+x_{2}y_{2}+x_{3}(1+\phi(y)))}
\chi_0(2^{j\kappa_1}y_1)\chi_0(2^{j\kappa_2}y_2)\\ \chi_1(\lambda)\lambda^k(y_1^2+y_2^2+(1+\phi(y_1, y_2)^2)^\frac{k}2 d\lambda dy.
  \end{eqnarray*}
Finally, we use scaling $2^{j\kappa_1}y_1\mapsto y_1, \,
2^{j\kappa_2}y_2\mapsto y_2$ in variables $y$
and obtain:
 \begin{eqnarray*}\nonumber
  u_{j}(x)=\frac{2^{\frac{3j}p-\frac{|\kappa|j}{p}}}{\sqrt{(2\pi)^3}}   \int
e^{-i2^{j}\lambda(x_{1}2^{-\kappa_1j}y_{1}+x_{2}2^{-\kappa_2j}y_{2}+x_{3}(1+\phi(\delta_{2^{-j}}(y))))}
\chi_0(y_1)\chi_0(y_2)\\ \chi_1(\lambda)\lambda^k(2^{-2\kappa_1j}y_1^2+2^{-2\kappa_2j}y_2^2+(1+\phi(\delta_{2^{-j}}(y))^2)^\frac{k}2 d\lambda dy.
  \end{eqnarray*}

Note that $|2^j\partial^\alpha\phi(\delta_{2^{-j}}(y))|<<1$ as $j>>1$ for $|\alpha|\ge0$ provided the support of $\chi_0$ are small enough.
 If $|x_3|>|x_{1}2^{-\kappa_1j}|+ |x_{2}2^{-\kappa_2j}|$ then we can use integration by parts formula in $\lambda$ and get:
 \begin{eqnarray*}\nonumber
  |u_{j}(x)|\lesssim_N \frac{2^{\frac{3j}p-\frac{|\kappa|j}{p}}}{(1+|x_32^j|)^N},
\end{eqnarray*}
provided $|x_32^j|>>1$, otherwise e.g. if $|x_32^j|\lesssim1$, then the last estimate trivially holds, for the function $u_j(x)$.

Assume $|x_3|\le |x_{1}2^{-\kappa_1j}|+ |x_{2}2^{-\kappa_2j}|$.  Then by using integration by parts formula in $(y_1, y_2)$ variables, we get the following estimate:
 \begin{eqnarray*}\nonumber
  |u_{j}(x)|\lesssim_N \frac{2^{\frac{3j}p-\frac{|\kappa|j}{p}}}{(1+|x_{1}2^{(1-\kappa_1)j}|+ |x_{2}2^{(1-\kappa_2)j}|)^N}.
\end{eqnarray*}
Finally, combining the obtained estimates we obtain:
 \begin{eqnarray*}\nonumber
  |u_{j}(x)|\lesssim_N \frac{2^{\frac{3j}p-\frac{|\kappa|j}{p}}}{(1+|2^jx_3|+|x_{1}2^{(1-\kappa_1)j}|+ |x_{2}2^{(1-\kappa_2)j}|)^N}.
\end{eqnarray*}

 Consequently,
 \begin{eqnarray*}\nonumber
 \|u_{j}\|_{L^{p}}\lesssim 1,\quad \mbox{for}\quad j>>1.
 \end{eqnarray*}
 Hence,  the sequence  $\{u_{j}\}_{j=1}^{\infty}$ is bounded in the space  $L^{p}(\mathbb{R}^3)$.

 On the other hand we have the relation:
 \begin{eqnarray*}\nonumber
  M_{k}u_{j}(x)=2^{j(3-|\kappa|)(-\frac{1}{p'})-kj+2j}
  F^{-1}
  \left(e^{i\varphi(\xi)}\frac{\chi_0\left(2^{j\kappa_1}\frac{\xi_1}
  {\varphi(\xi)}\right)\chi_0\left(2^{j\kappa_2}
  \frac{\xi_2}{\varphi(\xi)}\right)\chi_1(2^{-j}\varphi(\xi))}
  {\varphi(\xi)^{2}G\left(\frac{\xi_1}{\varphi(\xi)},\frac{\xi_2}{\varphi(\xi)}\right)}\right)(x).
   \end{eqnarray*}

 We perform  change of variables given by the scaling  $2^{-j}\xi\mapsto\xi$ and obtain:
\begin{eqnarray*}\nonumber
M_{k}u_{j}(x)=\frac{2^{j((3-|\kappa|)(-\frac{1}{p'})-k+3)}}{\sqrt{(2\pi)^3}}
\int_{\mathbb{R}^3} e^{i2^{j}(\varphi(\xi)- x\xi)}\frac{\chi_0\left(2^{j\kappa_1}\frac{\xi_1}
{\varphi(\xi)}\right)\chi_0\left(2^{j\kappa_2}\frac{\xi_2}{\varphi(\xi)}\right)\chi_1(\varphi(\xi))}
{\varphi^{2}(\xi)G\left(\frac{\xi_1}{\varphi(\xi)},\frac{\xi_2}{\varphi(\xi)}\right)}d\xi.
 \end{eqnarray*}

Then following M. Sugimoto we use change of variables  $\xi=(\lambda
y,\lambda(1+\phi(y)))$  and gain the relation:
\begin{eqnarray*}
M_{k}u_{j}(x)=\frac{2^{j((3-|\kappa|)(-\frac{1}{p'})-k+3)}}{\sqrt{(2\pi)^3}}\int
e^{i2^{j}\lambda(1-(x_{1}y_{1}+x_{2}y_{2}+x_{3}(1+\phi(y))))}\\
\chi_0(2^{j\kappa_1}y_1)\chi_0(2^{j\kappa_2}y_2)\chi_1(\lambda)d\lambda dy.
\end{eqnarray*}

Finally, we use change of variables
$2^{j\kappa_1}y_1\mapsto y_1, \,
2^{j\kappa_2}y_2\mapsto y_2$ and obtain:
\begin{eqnarray*}\nonumber
M_{k}u_{j}(x)=2^{j((3-|\kappa|)(-\frac{1}{p'})-k-|\kappa|
+3)}\int_{\mathbb{R}^3}
e^{2^{j}i\lambda((x_3-1)-2^{-j\kappa_1}y_{1}x_{1}-2^{-j\kappa_2}y_{2}x_{2}
-x_{3}\phi(2^{-j\kappa_1}y_1,
2^{-j\kappa_2}y_2))}\\ \chi_0(y_{1})\chi_0(y_2)\chi_1(\lambda)d\lambda dy.
\end{eqnarray*}

 If
$
|x_{3}-1|\ll 2^{-j},\, |x_{1}|\ll 2^{-j(1-\kappa_1)}, \, |x_2|\ll
2^{-j(1-\kappa_2)}$,  then the phase is the non-oscillating function, because $\lambda\backsim1$ and
\begin{eqnarray*}\nonumber
(x_3-1)-2^{-j\kappa_1}y_{1}x_{1}-2^{-j\kappa_2}y_{2}x_{2}
-x_{3}\phi(2^{-j\kappa_1}y_1,
2^{-j\kappa_2}y_2)=o(2^{-j})
\end{eqnarray*}
provided the supports of $\chi_0$ is small enough.

Consequently, we have the following lower bound:
\begin{eqnarray*}\nonumber
\|M_{k}u_{j}\|_{L^{p'}}\gtrsim 2^{j\left(2(3-|\kappa|)\left(\frac{1}{p}-\frac{1}{2}\right)-k\right)}.
\end{eqnarray*}

We can choose a linear coordinate system such that $h_{lin}(\phi)=1/|\kappa|$.
Therefore, if
\begin{eqnarray*}
k<k_p(v):=2\left(3-\frac1{h_{lin}(\phi)}
\right)\left(\frac{1}{p}-\frac{1}{2}\right),
\end{eqnarray*}
 then
\begin{eqnarray*}
\|M_{k}u_{j}\|_{L^{p'}}\rightarrow \infty\quad  (\mbox{as}\quad j\to+\infty).
\end{eqnarray*}
 Thus, the operator
 $M_{k}:L^{p}(\mathbb{R}^3)\rightarrow L^{p'}(\mathbb{R}^3)$ is unbounded provided $k<k_p(v)$.

In particular, we obtain the sharp lower bound for the case when the coordinate system is linearly adapted to the function $\phi$.
Thus, we obtain a proof of the Proposition \ref{LAcase}.

Moreover, we receive  a proof of the first part of the Theorem \ref{main} i.e. we get the sharp value of $k_p(v)$ in  the case when $\phi $ has a linearly adapted coordinate system.

\end{proof}

Further, we consider the case
 $2m+1<n$.

\begin{remark}\label{nonadap}
The proof of the Theorem \ref{lowest} shows that if $\phi$ has singularity of type $D_{n+1}$ and  $2m+1<n$ and $k< \left(5-\frac1{2m+1}\right)\left(\frac1p-\frac12\right),$ then
$\|M_{k}u_{j}\|_{L^{p'}}\rightarrow \infty (\mbox{as}\, j\to+\infty)$. Thus, the operator
 $M_{k}:L^{p}(\mathbb{R}^3)\rightarrow L^{p'}(\mathbb{R}^3)$ is an unbounded operator, whenever $k<\left(5-\frac1{2m+1}\right)\left(\frac1p-\frac12\right)$. Because
 \begin{eqnarray*}
 5-\frac1{2m+1}=6-\frac{2(m+1)}{2m+1}=6-\frac2{h_{lin}(\phi)}.
 \end{eqnarray*}
 \end{remark}

Now, we prove the following Theorem.

 \begin{thm}\label{NLA}
 If $2m+1<n$  then
 \begin{equation}\label{NLAcase}
  k_p(v)=\max\left\{ \left(5-\frac1{2m+1}\right)\left(\frac1p-\frac12\right), \left(6-\frac{2m+2}{n}\right)\left(\frac1p-\frac12\right)+\frac{2m+1}{2n}-\frac12\right\}.
 \end{equation}
  \end{thm}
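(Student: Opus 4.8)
The upper bound
\[
k_p(v)\le\max\Bigl\{\bigl(5-\tfrac1{2m+1}\bigr)\bigl(\tfrac1p-\tfrac12\bigr),\ \bigl(6-\tfrac{2m+2}{n}\bigr)\bigl(\tfrac1p-\tfrac12\bigr)+\tfrac{2m+1}{2n}-\tfrac12\Bigr\}
\]
is already in hand: it was deduced above from Theorem~\ref{anasug} by applying Proposition~\ref{Sugi1} with $q$ slightly below $2m+2$ (giving $L^{p_0}\to L^{p_0'}$ boundedness, $p_0=\tfrac{4m+4}{4m+3}$, for $k>\tfrac52-\tfrac3{2m+2}$) and with $q=\infty$ (giving $L^1\to L^\infty$ boundedness for $k>\tfrac52-\tfrac1{2n}$), together with the $L^2$-isometry $M_0$ at $p=2$ and the Stein interpolation theorem for analytic families of operators. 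Thus Theorem~\ref{NLA} reduces to the matching lower bound, i.e. to showing that $k_p(v)$ is at least the displayed maximum. The first entry equals $\bigl(6-\tfrac2{h_{lin}(\phi)}\bigr)\bigl(\tfrac1p-\tfrac12\bigr)$, and is a lower bound for all $1\le p\le 2$ by Remark~\ref{nonadap} (the linearly scaled Knapp sequence of the proof of Theorem~\ref{lowest}). So the only remaining task is to produce a second family of test functions realising
\[
k_p(v)\ \ge\ \bigl(6-\tfrac{2m+2}{n}\bigr)\bigl(\tfrac1p-\tfrac12\bigr)+\tfrac{2m+1}{2n}-\tfrac12=:\Theta .
\]

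At the endpoint $p=1$ this is the assertion that $K_k=F^{-1}(e^{i\varphi}a_k)\notin L^\infty(\mathbb{R}^3)$, hence $M_k$ is $L^1\to L^\infty$ unbounded, for every $k<\tfrac52-\tfrac1{2n}=3-\tfrac1{h(\phi)}$: localising $a_k$ near $v$, the frequency-$\sim\lambda$ part of $K_k$ has size $\sim\lambda^{3-k}\,\lambda^{-1/h(\phi)}$ near the focal point $-\nabla\varphi(v)$, because the relevant oscillatory integral $I(\lambda,\cdot)$ attains the rate $|\lambda|^{-1/h(\phi)}$ (at $s=0$ the adapted principal part $c\,w_1w_2^2+w_1^n$ has an isolated critical point and $\mathfrak m=0$, so there is no logarithmic loss and the bound is sharp from below), and summing the dyadic pieces diverges precisely when $3-k-\tfrac1{h(\phi)}\ge0$. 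To obtain $\Theta$ for all $1\le p\le2$ I would rerun the Sugimoto-type construction of the proof of Theorem~\ref{lowest} \emph{along the adapted, rather than the linearly adapted, coordinate system}: in adapted coordinates $w_1=x_1$, $w_2=x_2-x_1^m\omega(x_1)$ the phase is, up to a smooth nonvanishing factor, the weighted homogeneous polynomial $\tilde\phi_\pi\sim c\,w_1w_2^2+w_1^{\,n}$ with weight $\kappa^{ad}=\bigl(\tfrac1n,\tfrac{n-1}{2n}\bigr)$, $|\kappa^{ad}|=\tfrac1{h(\phi)}$. Accordingly I would take $\widehat{u_j}$ supported on the \emph{curved} cap
\[
\Bigl\{\xi:\ \varphi(\xi)\sim2^j,\quad\bigl|\tfrac{\xi_1}{\varphi(\xi)}\bigr|\lesssim2^{-j/n},\quad\bigl|\tfrac{\xi_2}{\varphi(\xi)}-\bigl(\tfrac{\xi_1}{\varphi(\xi)}\bigr)^{m}\omega\bigl(\tfrac{\xi_1}{\varphi(\xi)}\bigr)\bigr|\lesssim2^{-j(n-1)/(2n)}\Bigr\},
\]
$u_j$ being the analogue of the function in the proof of Theorem~\ref{lowest} in which the cut-offs $\chi_0(2^{j\kappa_i}\xi_i/\varphi)$ are replaced by cut-offs carving out this cap and the power of $2^j$ is fixed so that $\|u_j\|_{L^p}\sim1$. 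Over this cap $\phi$, written in adapted coordinates, is comparable to $2^{-j}\tilde\phi_\pi$, so the cap has the correct Knapp thickness $2^{-j}$ normal to $\Sigma$; performing the parametrisation $\xi=\lambda(y,1+\phi(y))$, then the anisotropic dilation $y_1\mapsto2^{-j/n}y_1$, $w_2:=y_2-y_1^m\omega(y_1)\mapsto2^{-j(n-1)/(2n)}w_2$, and the $2^{-j}$-dilation in $\xi$, brings the phase of $M_ku_j$ to a form that is non-oscillating on a suitable dual parallelepiped. Bookkeeping of the exponents — the normalisation of $u_j$, the factor $2^{-kj}$ from $a_k(2^j\xi)\sim2^{-kj}a_k(\xi)$, and the size of that parallelepiped — should give $\|M_ku_j\|_{L^{p'}}/\|u_j\|_{L^p}\gtrsim2^{j(\Theta-k)}$, whence $M_k$ is unbounded for $k<\Theta$.

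The main obstacle is the sharp two-sided estimate of $\|u_j\|_{L^p}$ for $1<p<2$. In the proof of Theorem~\ref{lowest} the Fourier support was a box, so $\|u_j\|_{L^p}\sim1$ followed from $\|\widehat{v_j}\|_{L^\infty}\le\|v_j\|_{L^1}$ and the tensor-product decay of $\widehat{v_j}$ off the dual box; here the Fourier support is a genuinely \emph{curved} tube, because $2m+1<n$ forces the curve $w_2=0$, i.e. $y_2=y_1^m\omega(y_1)$, to deviate from every affine function over the $y_1$-window of length $2^{-j/n}$ by $\sim2^{-mj/n}$, which is much larger than the cap's transverse width $2^{-j(n-1)/(2n)}$; consequently the cap's footprint in $(\xi_1,\xi_2)$ fills only the fraction $\sim2^{-j(n-2m-1)/(2n)}$ of its bounding rectangle. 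Hausdorff--Young only over-estimates $\|u_j\|_{L^p}$ (and would yield an unjustified, too-strong lower bound), so instead one must partition the tube into $\sim2^{j(n-2m-1)/(2n)}$ essentially flat sub-caps, evaluate the inverse Fourier transform of each by stationary phase, and sum almost orthogonally; it is exactly this ``wasted area'' that downgrades the naive adapted-Knapp exponent $\bigl(6-\tfrac2{h(\phi)}\bigr)\bigl(\tfrac1p-\tfrac12\bigr)=\bigl(5-\tfrac1n\bigr)\bigl(\tfrac1p-\tfrac12\bigr)$ to $\Theta$, thereby producing the additive term $\tfrac{2m+1}{2n}-\tfrac12$ and the steeper slope $6-\tfrac{2m+2}{n}$. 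Two further points need attention: the degenerate case $n=\infty$ (the $D_\infty$ singularity, where the $R$-condition forces $b_0\equiv0$, $\phi=b(x_1,x_2)\bigl(x_2-x_1^m\omega(x_1)\bigr)^2$, $h(\phi)=2$), handled by the limiting device $\kappa^{ad}=\bigl(\tfrac1n,\tfrac{n-1}{2n}\bigr)\to\bigl(0,\tfrac12\bigr)$ used for flat $\phi_\pi$ in the proof of Theorem~\ref{lowest}; and the verification that the two affine functions in the maximum cross exactly at $p=p_0=\tfrac{4m+4}{4m+3}$, so that the two lower bounds together reassemble the upper-bound maximum on all of $1\le p\le2$. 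Combining this with the upper bound established above completes the proof of Theorem~\ref{NLA}.
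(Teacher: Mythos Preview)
Your treatment of the upper bound and of the first entry of the maximum (via Theorem~\ref{lowest} and Remark~\ref{nonadap}) is correct and coincides with the paper's argument. The difference lies entirely in the second lower bound $\Theta$.

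You propose to run the Knapp construction in \emph{adapted} coordinates, taking a curved cap with widths $\bigl(2^{-j/n},2^{-j(n-1)/(2n)}\bigr)$ centred at the origin, and to control $\|u_j\|_{L^p}$ by cutting the resulting curved tube into almost-orthogonal flat sub-caps. You correctly flag this $L^p$ computation as the main obstacle, but you do not carry it out; the claim that the sub-cap sum reproduces precisely $\Theta$ is only asserted. (Your sub-cap count $2^{j(n-2m-1)/(2n)}$ is the area ratio of the bounding rectangle to the tube, not the number of essentially straight pieces; the two do not coincide.) So as it stands the proposal has a genuine gap at its most delicate point.

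The paper avoids this obstacle by a different and much simpler device. It does \emph{not} centre $y_1$ at the origin: it localises $2^{j/n}y_1$ near a fixed \emph{nonzero} point $c>0$ (the factor $\chi_1(2^{j/n}\xi_1/\varphi)$ with $\chi_1$ supported away from $0$), and it takes the transverse cutoff of width $2^{-jm/n}$, strictly wider than your adapted width $2^{-j(n-1)/(2n)}$. With this choice the phase is no longer $O(2^{-j})$ on the cap, so this is not a pure Knapp example; instead, after the change $z_1=2^{j/n}y_1\sim c$, $z_2=2^{jm/n}\bigl(y_2-y_1^m\omega(y_1)\bigr)$, the phase of $M_ku_j$ contains the term $-x_3\,2^{j(1-(2m+1)/n)}\lambda\,z_1z_2^2\,b(\cdots)$ with $z_1\sim c\ne0$, and one applies \emph{stationary phase in $z_2$}. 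This yields an extra factor $\bigl(2^{j(1-(2m+1)/n)}\bigr)^{-1/2}$, which combines with the cap volume to give exactly the exponent $\Theta-k$. Because $z_1$ is bounded away from $0$, the cap is essentially a parallelogram in $(y_1,y_2)$, and $\|u_j\|_{L^p}\lesssim1$ follows from direct integration by parts with no sub-cap decomposition needed. In short: the paper trades your narrow curved cap for a wider straight cap plus a stationary-phase gain in the transverse variable, and this is exactly what closes the second lower bound cleanly.
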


\begin{proof}
We already, proved the upper bound. So, it is enough to prove the inequality
\begin{eqnarray*}
 k_p(v)\ge \max\left\{ \left(5-\frac1{2m+1}\right)\left(\frac1p-\frac12\right), \left(6-\frac{2m+2}{n}\right)\left(\frac1p-\frac12\right)+\frac{2m+1}{2n}-\frac12\right\}.
\end{eqnarray*}
If $k<(5-\frac{1}{2m+1})(\frac{1}{p}-\frac{1}{2})$,
 then the operator  $M_{k}$ is not $L^p(\mathbb{R}^3)\mapsto L^p(\mathbb{R}^3)$ bounded (see Remark \ref{nonadap}).
So, $k_p(v)\ge  (5-\frac{1}{2m+1})(\frac{1}{p}-\frac{1}{2})$.

Further, assume that
\begin{eqnarray*}
k < \left(6-\frac{2m+2}{n}\right)\left(\frac1p-\frac12\right)+\frac{2m+1}{2n}-\frac12.
\end{eqnarray*}
We show that   $M_{k}$ is not $L^p(\mathbb{R}^3)\mapsto L^{p'}(\mathbb{R}^3)$ bounded.

We  slightly  modify  the M. Sugimoto sequence  (see \cite{sugumoto98}) and consider the sequence
\begin{eqnarray*}
u_{j}(x)=2^{-\frac{3j}{p'}+\frac{j(m+1)}{np'}}F^{-1}(v_{j}(2^{-j}\cdot))(x),
\end{eqnarray*}
where
$$v_{j}(\xi)=\chi_0\left(2^{\frac{jm}{n}}\left(
\frac{\xi_2}{\varphi(\xi)}-\left(\frac{\xi_1}{\varphi(\xi)}\right)^{m}\omega
\left(\frac{\xi_1}{\varphi(\xi)}\right)\right)\right)\frac{\chi_1\left(2^{\frac{j}{n}}\frac{\xi_1}
{\varphi(\xi)}\right)\chi_1(\varphi(\xi))|\xi|^{k}}{\varphi^{2}(\xi)
G\left(\frac{\xi_{1}}{\varphi(\xi)},\frac{\xi_2}{\varphi(\xi)}\right)},$$ where
$\chi_0,\,  \chi_1\in C_{0}^{\infty}(\mathbb{R})$ are non-negative smooth functions satisfying the conditions: $\chi_0(0)=1$ and support of
$\chi_0$  lie in a sufficiently small neighborhood of the origin. Suppose $0<c<<1$ is a fixed positive number (say $c=0.0001$) and $\chi_1$ is a non-negative smooth function concentrated in a sufficiently small neighborhood of the point $c$ and identically vanishes in a neighborhood of the origin and also $\chi_1(c)=1$,  (cf. \cite{sugumoto98}). Obviously
$v_{j}\in C_{0}^{\infty}(\mathbb{R}^{3})$. We will estimate the $L^p(\mathbb{R}^3)-$ norm of the function $u_j$:
We have
\begin{eqnarray*}
u_{j}(x)=2^{\frac{3j}{p}+\frac{j(m+1)}{np'}}\int_{\mathbb{R}^3}e^{-i2^j(\xi, x)}v_j(\xi)d\xi.
\end{eqnarray*}
 As before, we use change of variables $\xi=\lambda(y_{1}, y_{2}, 1+\phi(y_{1},y_{2}))$, then we use change of variables:
\begin{eqnarray*}
y_1=2^{-\frac{j}n} \eta_1, \quad y_2=2^{-\frac{jm}n}(\eta_2+\eta_1^m\omega(2^{-\frac{j}n} \eta_1)).
\end{eqnarray*}
Then we get:
\begin{eqnarray*}
u_{j}(x)=2^{\frac{3j}{p}-\frac{j(m+1)}{np}}\int_{\mathbb{R}^3}e^{-i2^{j}\lambda\Phi(\eta, x, 2^{-j})}
\chi_1(\eta_1)\chi_0(\eta_2)\\ \chi_1(\lambda)\lambda^k(2^{-\frac{2j}n} \eta_1^2+2^{-\frac{2jm}n}(\eta_2+\eta_1^m\omega(2^{-\frac{j}n} \eta_1))^2+(1+\tilde\phi(\eta))^2)^\frac{k}2 d\lambda d\eta,
\end{eqnarray*}
where
\begin{eqnarray*}
\Phi(\eta, x, 2^{-j}):= x_3(1+2^{-\frac{2m+1}n j} \eta_2^2(\eta_1 b_1(\delta_{2^{-j}}(\eta))+\eta_2^22^{\frac{1-2m}n j}b_2(2^{-\frac{mj}n} \eta_2))+ \\ 2^{-j} \eta_1^n\beta(2^{-\frac{j}n} \eta_1)))+  2^{-\frac{j}n} \eta_1x_1+2^{-\frac{jm}n}(\eta_2+\eta_1^m\omega(2^{-\frac{j}n} \eta_1))x_2.
\end{eqnarray*}

If $|x_3|>> |2^{-\frac{j}n} x_1|+|2^{-\frac{jm}n}x_2| $ then we can use integration by parts formula in $\lambda$ and obtain:
\begin{eqnarray*}\nonumber
  |u_{j}(x)|\lesssim_N \frac{2^{\frac{3j}p-\frac{(m+1)j}{np}}}{(1+|x_32^j|)^N},
\end{eqnarray*}
provided $|x_32^j|>>1$, otherwise the last estimate for the function $u_j(x)$ is trivially holds.

Then we consider the case $|x_3|<<|2^{j-\frac{j}n} x_1|+|2^{j-\frac{jm}n}x_2| $. Then we can use integration by parts in $(\eta_1, \eta_2)$ to have the estimate:
\begin{eqnarray*}\nonumber
  |u_{j}(x)|\lesssim_N \frac{2^{\frac{3j}p-\frac{(m+1)j}{np}}}{(1+|2^{j-\frac{j}n} x_1|+|2^{j-\frac{jm}n}x_2|)^N}.
\end{eqnarray*}
Now, we assume $|x_3|\backsim|2^{-\frac{j}n} x_1|+|2^{-\frac{jm}n}x_2|$. Moreover, if $|2^{-\frac{j}n} x_1|\not\backsim|2^{-\frac{jm}n}x_2| $.
Then we obtain:
\begin{equation}\label{intpart}
  |u_{j}(x)|\lesssim_N \frac{2^{\frac{3j}p-\frac{(m+1)j}{pn}}}{(1+|2^jx_3|+|2^{j-\frac{j}n} x_1|+|2^{j-\frac{jm}n}x_2|)^N}.
\end{equation}

Finally, we consider the case $|x_3|\backsim|2^{-\frac{j}n} x_1|\backsim|2^{-\frac{jm}n}x_2| $.
Then the phase function has no critical points in $\eta_2$. Then we can obtain estimate \eqref{intpart} by using integration by parts in $\eta_2$.

Thus, due to the inequality \eqref{intpart}  for large  $j$ we have
  $$\|u_{j}\|_{L^{p}(\mathbb{R}^{3})}\sim 1.$$
  Now, we consider a lower bound for $\|M_{k}u_{j}\|_{L^{p'}(\mathbb{R}^{3})}.$

We have:
$$M_{k}u_{j}=F^{-1}e^{i\varphi(\cdot)}a_{k}(\cdot)Fu_{j}=2^{-\frac{3j}{p'}
+j\frac{m+1}{np'}}F^{-1}(e^{i\varphi(\cdot)}a_{k}(\cdot)v_{j}(2^{-j}\cdot))(x).$$

We perform  change of variables given by the scaling  $2^{j}\xi\rightarrow \xi$ and obtain:
\begin{eqnarray*}\nonumber
M_{k}u_{j}(x)=\frac{2^{\frac{3j}{p}+\frac{j(m+1)}{np'}-kj}}{\sqrt{(2\pi)^3}}\int_{\mathbb{R}^{3}}
e^{i2^{j}(\varphi(\xi)-\xi
x)}\\ \chi_0\left(2^{\frac{jm}{n}}\left(\frac{\xi_2}{\varphi(\xi)}-\left(\frac{\xi_1}
{\varphi(\xi)}\right)^{m}\omega\left(\frac{\xi_1}{\varphi(\xi)}\right)\right)\right) \frac{\chi_0\left(2^{\frac{j}{n}
}\frac{\xi_1}{\varphi(\xi)}\right)\chi_1(\varphi(\xi))}{\varphi^{2}(\xi)
G\left(\frac{\xi_1}{\varphi(\xi)},\frac{\xi_2}{\varphi(\xi)}\right)}d\xi.
\end{eqnarray*}

Finally, we use change of variables  $\xi\rightarrow
\lambda(y_1,y_2,1+\phi(y_1,y_2))$.  Then we have:
\begin{eqnarray*}\nonumber
M_{k}u_{j}(x)=\frac{2^{\frac{3j}{p}+\frac{j(m+1)}{p'}-kj}}{\sqrt{(2\pi)^3}}\int_{\mathbb{R}^3}
e^{i2^{j}\lambda(1-
x_{3}-(y_{1}x_{1}+y_{2}x_{2}+x_{3}\phi(y_{1},y_{2})))}\times\\ \times
\chi_0(2^{\frac{jm}{n}}(y_{2}-y_{1}^{m}\omega(y_1)))\chi_1(2^{\frac{j}{n}}y_{1})
\chi_1(\lambda)d\lambda dy_1 dy_2.
\end{eqnarray*}

Now, we perform the change of variables
 \begin{eqnarray*}
 y_{1}=2^{-\frac{j}{n}}z_{1}, \quad y_2=2^{-j\frac{m}{n}}z_{1}^{m}\omega
 (2^{-\frac{j}{n}}z_{1})+2^{-j\frac{m}{n}}z_2.
 \end{eqnarray*}
  Then we get
  \begin{eqnarray*}\nonumber
  M_{k}u_{j}(x)=2^{\frac{3j}{p}+\frac{m+1}{np'}j-\frac{m+1}{n}j-kj}
   \int e^{i2^{j}\lambda\Phi_3(z, x, j)}\chi_0(z_2)\chi_1(z_1)\chi_1(\lambda)d\lambda dz_1 dz_2,
  \end{eqnarray*}
where
\begin{eqnarray*}\nonumber
\Phi_3(z, x, j):=1- x_3 -(2^{-\frac{j}{n}}x_1 z_1 +
  x_2 2^{-\frac{jm}{n}}z_{1}^{m}\omega(2^{-\frac{j}{n}}z_1)+z_2 2^
  {-\frac{jm}{n}}x_2+\\ x_3 2^{-\frac{j(2m+1)}{n}}z_1z_{2}^{2}b(2^{-\frac{j}{n}}z_1,
  2^{-\frac{jm}{n}}(z_{1}^{m}\omega(2^{-\frac{j}{n}}z_1)+z_2))+2^{-j}z_{1}^{n}
  \beta(2^{-\frac{j}{n}}z_1)).
 \end{eqnarray*}

We use stationary phase method in $z_2$ assuming , $|1-x_3|<<2^{-j}, \,
  |x_1|<<2^{-\frac{n-1}{n}j}, \, |x_2|<<2^{-\frac{j(n-m)}{n}}$. We can use stationary phase method in $z_2$ because $z_1\backsim1$. Then we obtain:
  \begin{eqnarray*}\nonumber
  M_k u_{j}(x)=2^{j(\frac{3}{p}+\frac{m+1}{np'}-\frac1{2n}-\frac12
  -k)}
  \left(\int_{\mathbb{R}^{2}}e^{i2^{j}\lambda \Phi_4}\chi_0(z_{2}^{c}(z_1, x_2))\chi_1(z_1)\chi_1(\lambda)d\lambda dz_1+O(2^{j(\frac{2m+1}n-1))}\right),
\end{eqnarray*}
where
  \begin{eqnarray*}\nonumber
\Phi_4:=\Phi_4(z_1, x, j):=1-x_3 -x_1 z_1
  2^{-\frac{j}{n}}+x_2 2^{-\frac{jm}{n}}z_{1}^{m}\omega(2^{-\frac{j}{n}}z_1
  )+ \\ 2^{-j}z_{1}^{n}\beta(2^{-\frac{j}{n}}z_1)+x_{2}^{2}2^{-\frac{2jm}{n}}
  B(z_1,x_2).
\end{eqnarray*}

From here we obtain the lower bound:
  $$\|M_{k}u_{j}\|_{L^{p'}(\mathbb{R}^{3})}\ge 2^{j\left(\left(6-\frac{2m+2}{n}\right)\left(\frac1p-\frac12\right)+\frac{2m+1}{2n}-\frac12-k\right)}c,$$
   where $c>0$ is a constant which does not depend on $j$.
   Thus if
   \begin{eqnarray*}
   k<\left(6-\frac{2m+2}{n}\right)\left(\frac1p-\frac12\right)+\frac{2m+1}{2n}-\frac12
   \end{eqnarray*}
   then the operator $M_k$ is not $L^p(\mathbb{R}^3)\mapsto L^{p'}(\mathbb{R}^3)$ bounded.

Analogical result holds true for the case $n=\infty$.

Thus if  $k<k_p(v)$ then the   $M_k$ is not $L^p- L^{p'}$ bounded operator.
   This completes a proof of the Theorem \ref{NLA}.

\end{proof}

A proof of the  main Theorem \ref{main}  follows from the Theorems \ref{NLA}  and \ref{preest} with \ref{lowest}.



\end{document}